\setlist[itemize]{leftmargin=0.0mm}
 \newtheorem{lemma}{Lemma}
 \newtheorem{theorem}{Theorem}[section]
\newcommand{\norm}[1]{\left\|#1\right\|}
\newcommand{\rev}[1]{{\color{black}#1}}  
\begin{document}

\title{Multirate partially explicit scheme for multiscale flow problems}
 \author{Wing Tat Leung 
	\and Yating Wang } 
 \date{}
 \maketitle

\begin{abstract}
For time-dependent problems with high-contrast multiscale coefficients, the time step size for explicit methods is affected by the magnitude of the coefficient parameter. With a suitable construction of multiscale space, one can achieve a stable temporal splitting scheme where the time step size is independent of the contrast \cite{cem-splitting}. Consider the parabolic equation with heterogeneous diffusion parameter, the flow rates vary significantly in different regions due to the high-contrast features of the diffusivity. In this work, we aim to introduce a multirate partially explicit splitting scheme to achieve efficient simulation with the desired accuracy.
We first design multiscale subspaces to handle flow with different speeds. For the fast flow, we obtain a low-dimensional subspace for the high-diffusive component and adopt an implicit time discretization scheme. The other multiscale subspace will take care of the slow flow, and the corresponding degrees of freedom are treated explicitly. Then a multirate time stepping is introduced for the two parts. The stability of the multirate methods is analyzed for the partially explicit scheme. Moreover, we derive local error estimators corresponding to the two components of the solutions and provide an upper bound of the errors. An adaptive local temporal refinement framework is then proposed to achieve higher computational efficiency. Several numerical tests are presented to demonstrate the performance of the proposed method. 


\end{abstract}

\section{Introduction}

	Modeling of flow and transport in complicated porous media in various physical and engineering applications encounters problems with multiscale features. In particular, the properties of the underlying media, such as thermal diffusivity or hydraulic conductivity, have values across different magnitudes. This poses challenges in the numerical simulation since the high contrast feature of the heterogeneous media introduces stiffness for the system. In terms of temporal discretization, the time-stepping depending on the magnitude of the multiscale coefficient is needed for explicit schemes. For the spatial discretization, \rev{multiscale methods including multiscale Finite Element Methods\cite{eh09, GMsFEM13}, variational multiscale method \cite{hfmq98}, heterogeneous multiscale methods\cite{ee03},  localized orthogonal decomposition \cite{maalqvist2014localization, henning2013oversampling, henning2014localized}, Gamblets\cite{owhadi2017multigrid} and many others} are introduced to handle the issue. 
	 The multiscale model reduction methods include both local \cite{eh09, aarnesMMS, aarnes2008mixed, allaire2005multiscale, egw10} and global \cite{POD_notes, chinesta2011short, benner2015survey, nonlinear_deim, bui2008model} approaches to reduce computational expenses. The idea is to construct reduced order models to approximate the full fine-scale model and achieve efficient computation. Among these methodologies, the family of generalized multiscale finite element methods (GMsFEM) \cite{GMsFEM13, MixedGMsFEM, AdaptiveGMsFEM, adaptive_offline} are proposed to effectively address multiscale problem\rev{s} with high-contrast parameters.
	It first formulates some local problems on coarse grid regions to get snapshot bases that can capture the heterogeneous properties, and then designs appropriate spectral problems to get important modes in the snapshot space. The GMsFEM approach share\rev{s} some similarities with multi-continuum methods. The basis functions can recognize the high-contrast features such as channels that need to be represented individually. The convergence of the GMsFEM depends on the eigenvalue decay, and the small eigenvalues correspond to the high permeable channels. 

	To construct multiscale method such that the convergence is independent of the contrast and linearly decreases with respect to mesh size under suitable assumptions, the constraint energy minimizing GMsFEM (CEM-GMsFEM) was initiated\cite{cem-gmsfem, chung2019correction}. This approach begins with a suitable choice of auxiliary space, where some local spectral problems in coarse blocks are solved. The auxiliary space includes the minimal number of basis functions to identify the essential information of the channelized media. Then it will be used to compute the solutions of constraint energy minimizing problem in some oversampling coarse regions to handle the non-decaying property. The resulting localized solutions form the multiscale space.
	
	To adapt the CEM-GMsFEM for flow-based upscaling, the nonlocal multicontinuum upscaling method (NLMC) \cite{NLMC} is proposed by modifying the above framework. The idea is to use simplified auxiliary space by assuming that each separate fracture network within a coarse grid block is known. The auxiliary bases are piecewise constants corresponding to fracture networks and matrix, which are called continua. Then the local problems are formulated for each continuum by minimizing the local energy subject to appropriate constraints. This construction returns localized basis functions which can automatically identify each continuum. 
    Further, due to the property of the NLMC basis, this approach will provide non-local transmissibilities which describe the transfer among coarse blocks in an oversampled region and among different continua. 

	Consider the time-dependent problem with high-contrast coefficients, there have been various approaches to handle multiscale stiff \rev{systems \cite{abdulle2012explicit, ariel2009multiscale, engquist2005heterogeneous, giraldo2013implicit, maalqvist2018multiscale, owhadi2017gamblets}}. Recently, a temporal splitting method is combined with the spatial multiscale method \cite{cem-splitting} to produce a contrast-independent partially explicit time discretization scheme. It splits the solution of the problem into two subspaces which can be computed using implicit and explicit methods, instead of splitting the operator of the equation directly based on physics \cite{sportisse2000analysis, tang1998convergence,verwer1998note, hundsdorfer2013numerical, marchuk1990splitting}. The multiscale subspaces are carefully constructed. The dominant basis functions stem from CEM-GMsFEM which have very few degrees of freedom and are treated implicitly. The additional space as a complement will be treated explicitly. It was shown that with the designed spaces, the proposed implicit-explicit scheme is unconditionally stable in the sense that the time step size is independent of the contrast. Following a similar idea in \cite{cem-splitting}, in this work, we will propose a multirate time-stepping method for the multiscale flow problem. 
	
    
Multirate time integration method has been studied extensively in the past decades. Based on different splittings of the target equation, multiple time stepping is utilized in different parts of \rev{the system} according to computational cost or complexity of the physics. \rev{By partitioning the state variables into fast/active and slow/latent components, the multirate scheme with automatic step-size was introduced for linear multistep methods in \cite{gunther2001multirate}, and some self-adjusting multirate time stepping strategy was studied for stiff ODEs was discussed in \cite{savcenco2007multirate}. To handle the coupling between active and latent components and improve stability, schemes based on Runge–Kutta methods \cite{gunther2001multirate, gunther2016multirate} and Rosenbrock-Wanner methods \cite{gunther1993multirate} were proposed. In these approaches, the partition of the system is done in advance before performing a macro-step. To realize dynamic partitioning, multirate extrapolation methods were investigated \cite{engstler1997multirate}. Besides many applications, the multirate schemes were also favored in the simulation of PDEs including hyperbolic conservation laws and parabolic problems \cite{constantinescu2007multirate, dawson2001high, maurits1998explicit, osher1983numerical}. 
The solutions of the parabolic equations may have some localized properties in space and time due to geometric features of the domain and boundaries, or the effects of the source term, thus adaptive time refinement schemes combined with local adaptivity in space are attractive approaches \cite{dawson1991finite, ewing1994finite, shishkin2000interpolation, trompert1991static}. In these works, some nested or composite grids were usually utilized. The difficulties that arise at the interface between local regions were treated carefully, and the time discretization was implicit or locally implicit. 
There are many other multirate approaches to improve efficiency when solving multiscale parabolic problems \cite{abdulle2020explicit, carciopolo2020conservative, constantinescu2013extrapolated}. In this work, we split the solution of the parabolic equation into fast and slow components based on multiscale space construction and employ a partially explicit scheme to solve the splitting system with adaptive multirate time stepping.}  


One key of our approach is to integrate the multirate approach with multiscale space construction. Due to the high contrast property of the coefficients, the solutions pass through different regions of the porous medium with different speeds in the flow problem. Different from the previous approach \cite{cem-splitting}, where the multiscale basis functions are formulated for dominant features (the first space) and complementary information (the second space), we propose to design multiscale spaces in different regions to handle the fast (the first space) and slow (the second space) components of the \rev{flow} separately. \rev{We remark that, in the previous approach, the problem can still be solved with the basis in the first space only, and the second space provides additional information to reduce the approximation error. However, in our approach, bases from both the first space and the second space are required to solve the problem.} We use the simplified auxiliary space containing piecewise constant functions as in the NLMC framework. We only keep the basis representing the high-diffusive region in the first space and adopt an implicit time discretization scheme. The second space consists of bases representing the remaining region, it will take care of the slow flow and the corresponding degrees of freedom are solved explicitly. Next, we introduce a multirate approach where different time step sizes are employed in the partially explicit splitting scheme, such that different parts of the solution are sought with time steps in line with the dynamics. We start with a coarse step size for both equations and refine local coarse time blocks based on some error estimators. With a finer discretization, the accuracy of the approximation can be improved. We analyze the stability of the multirate methods for all four cases when we use coarse or fine time step size alternatively for the implicit and explicit parts of the splitting scheme. It shows that the scheme is stable as long as the coarse time step size satisfies some suitable conditions independent of the contrast. 
	Moreover, we propose an adaptive algorithm for the splitting scheme by deriving error estimators based on the residuals. The two error estimators corresponding to the two components of the solutions can provide an upper bound of the errors. Compared with uniform refinement, an adaptive refining algorithm can enhance the efficiency significantly. Several numerical examples are presented to demonstrate the effectiveness of the proposed adaptive method.
    
    The paper is organized as follows. In Section \ref{sec:prelim}, we describe the problem setup and the partially explicit scheme. The construction of the multiscale spaces is discussed in Section \ref{sec:spaces}. In Section \ref{sec:main}, the multirate method is presented, the subsection \ref{sec:stability} is devoted to the stability analysis and the subsections \ref{sec:main_method}-\ref{sec:alg} present the adaptive algorithm. Numerical tests are shown in Section \ref{sec:numerical}. A conclusion is drawn in Section \ref{sec:conclusion}.

\section{Problem Setup}\label{sec:prelim}
Consider the parabolic equation 
\begin{equation*}
\begin{aligned}
    \frac{d u}{d t} - \nabla \cdot (\kappa \nabla u) &= f \;\;\;\; \text{ on } \Omega\times (0,T] \\
    u &= 0 \;\;\;\; \text{ on } \partial \Omega \times (0,T]\\
    u &= u_0  \;\;\;\; \text{ on } \partial \Omega \times \{0\}
\end{aligned}
\end{equation*}
where $\kappa \in L^{\infty}(\Omega)$ is a heterogeneous coefficient with high contrast, that is, the value of the conductivity/permeability in different regions of $\kappa$ can differ in magnitudes. 

The weak form of the problem is to seek $u(t,\cdot) \in V=H_0^1(\Omega)$ such that
\begin{equation*}
\begin{aligned}
    (\frac{\partial u }{\partial t}, v) + a(u,v) &= (f,v), \quad \forall v\in V, \quad t\in(0,T]\\
    u(0,\cdot) &= u_0 
\end{aligned}
\end{equation*}
where $\displaystyle{a(u,v) = \int_{\Omega} \kappa \nabla u \cdot \nabla v}$.

Now consider a coarse spatial partition $\mathcal{T}_H$ of the computational domain $\Omega$, we will construct suitable multiscale basis functions on $\mathcal{T}_H$ and form a multiscale space $V_H$ which is a subspace of $V$. Let $\tau$ be the time step size. The discretization in the space $V_H$ with implicit backward Euler scheme in time reads 
\begin{equation}\label{eq:standard_scheme}
    \left( \frac{u_H^{k+1} -u_H^k }{\tau}, v\right) + a(u_H^{k+1}, v) = (f^{k+1}, v), \;\;\;\; \forall v \in V_H
\end{equation} 
where $N=\frac{T}{\tau}$ is the number of time steps, $u_H^k = u_H(t_k)$\rev{, and $t_k = k\tau$}. It is well-known that this implicit scheme is unconditionally stable.

Suppose the multiscale space $V_H$ can be decomposed into two subspaces
\begin{equation*}
    V_H = V_{H,1} + V_{H,2},
\end{equation*}
then a partial explicit temporal splitting scheme \cite{cem-splitting} is to find $u_{H,1}^{k} \in V_{H,1}$ and $u_{H,2}^{k} \in V_{H,2}$, for all $k$ satisfying
\begin{equation} \label{eq:partial_exp1}
    \left( \frac{u_{H,1}^{k+1} -u_{H,1}^k }{\tau}, v_1 \right) + \left( \frac{u_{H,2}^{k} -u_{H,2}^{k-1} }{\tau }, v_1 \right) + a(u_{H,1}^{k+1} +u_{H,2}^{k}, v_1) = (f^{k+1}, v_1),
\end{equation}
\begin{equation}
   \begin{aligned}\label{eq:partial_exp2}
        \left( \frac{u_{H,2}^{k+1} -u_{H,2}^k }{\tau}, v_2 \right) +\left( \frac{u_{H,1}^{k} -u_{H,1}^{k-1} }{\tau}, v_2 \right) + a((1-\omega)u_{H,1}^{k} + &\omega  u_{H,1}^{k+1} +u_{H,2}^{k}, v_2) \\
       &  = (f^{k+1}, v_2), 
\end{aligned} 
\end{equation}
$\forall v_1 \in V_{H,1}, \forall v_2 \in V_{H,2}$, where $\omega \in [0,1]$ is a customized parameter. In the case $\omega = 0$, the two equations are decoupled, and can be solved simultaneously. In the case $\omega = 1$, the second equation depends on the solution $u_{H,1}^{k+1}$, thus the two equations will be solved sequentially.

The solution at time step $n+1$ will be $u_H^{n+1} = u_{H,1}^{n+1} + u_{H,2}^{n+1}$.
It was \rev{proved} in \cite{cem-splitting} that under appropriate choices of the multiscale spaces $V_{H,1}$ and $V_{H,2}$, the above implicit-explicit scheme resulted from the temporal splitting method for multiscale problems are stable with time step independent of contrast. In \cite{cem-splitting}, the dimension of $V_{H,1}$ is low and it contains some dominant multiscale basis functions, the second space $V_{H,2}$ includes additional bases representing the missing information. In this paper, we will construct multiscale spaces corresponding to different time scales, where the fast and slow parts of the solution are treated separately.

\section{Construction of multiscale spaces}\label{sec:spaces}
In this section, we will present the construction of multiscale spaces. 
We will first discuss the basis construction for $V_{H,1}$ based on the contraint energy minimizing GMsFEM (CEM-GMsFEM) \cite{cem-gmsfem} and the nonlocal multicontinuum method (NLMC)\cite{NLMC, zhao2020analysis}.

\subsection{\rev{The idea of CEM-GMsFEM}}
To start with, we introduce some notations for the fine and coarse discretization of the computational domain $\Omega$. Let $\mathcal{T}^H$ 
be a coarse partition with mesh size $H$.
Denote by $\{K_i\}$ ($ i = 1, \cdots, N_c$) the set of coarse blocks 
in $\mathcal{T}^H$, and $K_i^+$ is an oversampled region with respect to each $K_i$, where the oversampling part contains a few layers of coarse blocks neighboring $K_i$. Let $V(K_i)$ be the restriction of $V=H_0^1(\Omega)$ on $K_i$.

Under the framework of CEM-GMsFEM, one first constructs an auxiliary space.  Consider the spectral problem
\begin{equation}\label{eq:spectral}
a_i(\phi_{\text{aux},k}^{(i)}, v) = \lambda_k^{i} s_i(\phi_{\text{aux},k}^{(i)}, v), \quad \forall v \in V(K_i), 
\end{equation}
where $\lambda_k^{i} \in \mathbb{R}$ and $\phi_{\text{aux},k}^{(i)} \in V(K_i)$ are corresponding eigenpairs, and 
\begin{equation*}
a_i(u,v) = \int_{K_i} \nabla u \cdot \nabla v, \quad \quad s_i(u,v) = \int_{K_i} \tilde{\kappa} uv,
\end{equation*}
with $\tilde{\kappa}  = \sum_{j} \kappa |\nabla \chi_j|^2$, and $\rev{ \chi_j}$ denotes the multiscale partition of unity function. Upon solving the spectral problem, we arrange the eigenvalues of \eqref{eq:spectral} in an ascending order, and select the first $l_i$ eigenfunctions to form the auxiliary basis functions. Define $V_{\text{aux}}^{(i)} :=  \text{span} \{\phi_{\text{aux},k}^{(i)}, \quad 1 \leq k \leq l_i\}$, where $1\leq i \leq N_c$ and $N_c$ is the number of coarse elements. Then the global auxiliary space $V_{\text{aux}} = \bigoplus_{i} V_{\text{aux}}^{(i)}$. We note that the auxiliary space needs to be chosen appropriately in order to get good approximation results. That is, the first few basis functions corresponding to small eigenvalues (representing all the channels) have to be included in the space. 

Define a projection operator $\pi_i: L^2(K_i) \mapsto V_{\text{aux}}^{(i)}$ \rev{as}
\begin{equation*}
   \pi_i (u) = \sum_{k=1}^{l_i} \frac{s_i(u,\phi_{\text{aux},k}^{(i)})}
   {s_i(\phi_{\text{aux},k}^{(i)},\phi_{\text{aux},k}^{(i)})} \phi_{\text{aux},k}^{(i)}, \;\; \forall u \in V,
\end{equation*}
and $\pi:  L^2(\Omega) \mapsto V_{\text{aux}}$ \rev{such that} $\pi =\sum_{i=1}^N \pi_i$. Define the null space of $\pi$ to be $\tilde{V}$:
$$\tilde{V} = \{v\in V\;|\;\; \pi(v)=0\}.$$

Let the global basis $\psi_{{glo},j}^{(i)}$ be the solution of the optimization problem
\begin{equation*}
 \begin{aligned}
    \psi_{{glo},j}^{(i)}  = \arg\!\min \{a(v,v)\; &| \; v\in V_0(K_i^+), \; s(v,\phi_{\text{aux},k}^{(i)}) = 1\; \\
    &\text{ and }s(v,\phi_{\text{aux},k'}^{(i')}) = 0 \; \; \forall i' \neq i, k'\neq k\},
\end{aligned}   
\end{equation*}
\rev{where $V_0(K_i^+)$ denotes the space of all functions in $V(K_i^+)$ with a vanishing trace on the boundary of $K_i^+$}. Define $V_{\text{glo}} = \text{span} \{\psi_{{glo},j}^{(i)},\; 1\leq i \leq N_c, \;  1\leq j \leq l_i \}$. 
It can be seen that $V_{\text{glo}}$ is $a$-orthogonal to $\tilde{V}$, that is 
\begin{equation*}
    a(\psi_{{glo},j}^{(i)}, v) = 0, \; \forall v \in \tilde{V}.
\end{equation*}

Then the CEM multiscale basis $\psi_{{cem},j}^{(i)}$ is a localization of $\psi_{{glo},j}^{(i)}$, and \rev{is} also computed using the auxiliary space $V_{\text{aux}}^{(i)}$. The idea is to solve the constraint energy minimization problem in a localized region $K_i^+$
\begin{equation}\label{eq:specbasis}
\begin{aligned}
 a(\psi_{{cem},j}^{(i)}, w) +  s(w, \mu_j^{(i)}) &= 0, \quad \forall w \in V(K_i^+),\\
s(\psi_{{cem},j}^{(i)}, \nu) &=  s(\phi_{\text{aux},j}^{(i)} , \nu),   \quad \forall \nu \in V_{\text{aux}}^{(\rev{i})}, 
\end{aligned}
\end{equation}
where $\phi_{\text{aux},j}^{(i)} \in  V_{\text{aux}}^{(i)}$ is an auxiliary basis.

The multiscale space is then $V_{cem} := \text{span} \{\psi_{{cem},j}^{(i)}, \; 1\leq j \leq  l_i, 1 \leq i \leq N_c\}$, it is an approximation to the global space $V_{\text{glo}}$. 

Note that the construction of CEM basis which we have presented here is general and can handle complex heterogeneous permeability field $\kappa$ (with high contrast). \rev{In this work, we assume $\kappa$ is a fractured media, where the value of $\kappa$ in the background region (called matrix) and in the fractured region are constants with high contrast, and the configuration of the highly permeable fractures in the domain is explicitly known. This assumption is reasonable in many real applications, thus we can consider a simplified construction of the basis functions in this case. } 
\subsection{\rev{Construction of multiscale spaces $V_{H,1}$ and $V_{H,2}$ based on NLMC}} \label{sec:V2}
The domain $\Omega$ for the media with fracture networks can be represented as follows
\begin{equation*}
\Omega = \Omega_m \bigoplus_{l=1}^s d_l \Omega_{f,l}
\end{equation*}
where the subscripts $m$ and $f$ denote the matrix and fractures correspondingly. In the fracture regions $\Omega_{f,l}$, the scalar $d_l$ denotes the aperture, and $s$ is the number of discrete fracture networks. The permeabilities of matrix and fractures usually \rev{differ in magnitudes}. 
In this setting, the constraint energy minimizing basis can be constructed via NLMC \cite{NLMC, zhao2020analysis} and the resulting basis functions can separate the continua such as matrix and fracture automatically. To be specific, for a given coarse block, we use constants for each separate fracture network, and then a constant for the matrix \rev{to form} the simplified auxiliary space. \rev{Specifically, for any coarse block $K_i$,  we write $K_i =K_{i,f} \cup K_{i,m}$ where $K_{i,f}$ is the high-contrast channelized region, and $K_{i,m}$ is its complement in $K_i$. Denote by $K_{i,f} =  \{ \Omega_{f,j} \cap K_i \neq \varnothing, \forall l =1, \cdots, s\}$ the set of discrete fractures/channels, we write $K_{i,f} :=  \{f_j^{(i)}, \; j = 1, \cdots, m_i\}$, and $m_i$ is the number of non-connected fractures in $K_i$. We then define two auxiliary spaces}
\rev{\begin{equation}\label{eq:nlmc_aux}
\begin{aligned}
V_{\text{aux},1}^{(i)} & = \text{span}\{\phi_{\text{aux},k}^{(i)}\; | \phi_{\text{aux},k}^{(i)} = 0 \text{ in } K_{i,m}, \; \phi_{\text{aux},k}^{(i)} = \delta_{jk} \text{ in } f_j^{(i)},  \;\; k =1, \cdots, m_i\}\\
V_{\text{aux},2}^{(i)} & = \text{span}\{\phi_{\text{aux},0}^{(i)}\; |
\phi_{\text{aux},0}^{(i)} = 1 \text{ in } K_{i,m}, \;
\phi_{\text{aux},0}^{(i)} = 0 \text{ in } K_{i,f}\}
\end{aligned}
\end{equation}
}
\rev{Consider an oversampled region $K_i^+$} of the coarse block $K_i$, \rev{following a similar idea as in CEM-GEMsFEM}, the NLMC basis $\psi_m^{(i)}$ are obtained by \rev{minimizing the energy $a(\psi_m^{(i)}, \psi_m^{(i)})$, with the constraints corresponding to the previously defined simplified auxiliary spaces \eqref{eq:nlmc_aux}. That is to find $\psi_m^{(i)} \in V_0(K_i^+)$ and $\mu_0^{(j)}, \mu_n^{(j)} \in \mathbb{R}$ from 
}the following localized constraint energy minimizing problem
\begin{equation}\label{eq:basis}
\begin{aligned}
&a(\psi_m^{(i)}, v) + \sum_{K_j \subset K_i^+} \left(\mu_0^{(j)} \int_{\rev{K_{j,m}}}v  + \sum_{1 \leq n \leq m_j} \mu_n^{(j)} \int_{ f_n^{(j)}} v \right) = 0, \quad   \forall v\in V_0(K_i^+), \\
&\int_{\rev{K_{j,m}}}\psi_m^{(i)}    = \delta_{ij} \delta_{m0}, \quad   \forall K_j \subset K_i^+, \\
&\int_{f_n^{(j)}} \psi_m^{(i)}    = \delta_{ij} \delta_{mn}, \quad   \forall f_n^{(j)} \in \mathcal{F}_{j}, \;  \forall K_j \subset K_i^+.
\end{aligned}
\end{equation}
  The NLMC basis \rev{functions are} then $ \{ \psi_m^{(i)}, \; 0 \leq m \leq  m_i, 1 \leq i \leq N_c\}$. We remark that the resulting basis separates the matrix and fractures automatically, and have spatial decay property\rev{\cite{cem-gmsfem, NLMC, zhao2020analysis}}. \rev{Furthermore, because the local auxiliary basis are constants within fractures and the matrix, the solution variables on the coarse level obtained using NLMC basis is physically meaningful, they denote the solution averages in each continuum (fracture/ channel) in each coarse region.}

One choice of the two multiscale spaces is to let $V_{H,1} = \text{span}\{ \psi_m^{(i)}, \; 1 \leq m \leq  m_i, 1 \leq i \leq N_c\}$ and $V_{H,2} = \text{span}\{ \psi_0^{(i)}, \; 1 \leq i \leq N_c\}$. In this work, we further want to include the constant basis in the second space $V_{H,2}$. Thus we perform an additional step to slightly modify the definition of two spaces. Denote \rev{the average of all NLMC basis by}
\begin{equation}\label{eq:average_basis}
    \bar{\psi}:= \frac{1}{L} \sum_{i=1}^{N_c} \sum_{m=\rev{0}}^{m_i} \psi_m^{(i)},
\end{equation}
where $L=\sum_{i=1}^{N} m_i$. 

Let $\displaystyle{ \tilde{\psi}_m^{(i)} =\psi_m^{(i)} - \frac{s(\psi_m^{(i)}, \bar{\psi})}{s(\bar{\psi},\bar{\psi})}\bar{\psi}},\; 0 \leq m \leq  m_i, \; 1 \leq i \leq N_c$. 
To simplify the notation, we omit the double scripts in $\tilde{\psi}_m^{(i)}$ and denote the set of basis by $\{\displaystyle{ \tilde{\psi}_k, \;\; k = 1, \cdots, L} \}$.

Finally, we define the space $V_{H,1}$ as follows:
\begin{equation}\label{eq:VH1}
    V_{H,1} = \text{span} \{\tilde{\psi}_k, \quad 1 \leq k \leq L-1\}.
\end{equation}
The basis functions corresponding to the matrix and the basis \rev{$\bar{\psi}$ will be included in the second subspace $V_{H,2}$, that is 
\begin{equation}\label{eq:VH2}
     V_{H,2} = \text{span} \{ \bar{\psi},\; \psi_0^{(i)},\; 1 \leq i \leq N_c\}.
\end{equation}
}
\rev{We note that we take away the last basis in $V_{H,1}$} to remove linear dependency \rev{between the two spaces}. \rev{By this construction, $V_{H,1}$ contains basis representing the high contrast fractures/channels only, and $V_{H,2}$ contains basis representing the background matrix and the constant basis. This separates the slow and fast flow regions of the media.}


\rev{
In this work, the simplified basis construction works well for the fractured media. We remark that in heterogeneous media, the spaces $V_{H,1}$ and $V_{H,2}$ can be enriched to enhance the approximation of the solutions. The spatial enrichment will be investigated in our future work.}

\section{Multirate time stepping for partially explicit scheme}\label{sec:main}

Based on the multiscale spaces constructed in Section \ref{sec:spaces}, we introduce a multirate time stepping partially explicit temporal splitting scheme. Consider the coarse time step size $\Delta T$ and fine time step size $\Delta t$, where $\Delta T = m \Delta t$.
Denote by the fine partition of the time domain $(0,T]$ by 
$$ 0=t_0 < t_1 < \cdots<  t_{N-1} = T.$$
The coarse partition of the time domain $(0,T]$ is formed by $$ 0=T_0< T_1<  \cdots<  T_{(N-1)/m} = T.$$
Further, we write each coarse time interval $(T_k, T_{k+1}] = \cup_{j=n_k}^{n_{k+1}-1} (t_{j}, t_{j+1}]$ where $n_k=km$.

The multirate scheme is then defined as follows. \rev{In} each coarse interval $(T_k, T_{k+1}]$, we are seeking for $u^{n_{k+1}}=u_1^{n_{k+1}}+u_2^{n_{k+1}}$ \rev{given the solution at the previous coarse time step} $u^{n_{k}}=u_1^{n_{k}}+u_2^{n_{k}}$. \rev{The two equations will take the time steps in the following four cases:} using coarse time step size in both \eqref{eq:partial_exp1} and \eqref{eq:partial_exp2} (coarse-coarse), using coarse time step size in \eqref{eq:partial_exp1} and using fine time step size in  \eqref{eq:partial_exp2} (coarse-fine), using coarse time step size in \eqref{eq:partial_exp1} and using fine time step size in  \eqref{eq:partial_exp2} (fine-coarse), using fine time step size in \eqref{eq:partial_exp1} and using fine time step size in both \eqref{eq:partial_exp1} and \eqref{eq:partial_exp2} (fine-fine).

\begin{itemize}
    \item[]\textbf{Case 1 (coarse-coarse)}: Coarse time step size for \eqref{eq:partial_exp1}, coarse time step size for \eqref{eq:partial_exp2}. That is, take $\tau = \Delta T$ in both equations, let $\bar{u}_{H,1}^{n_{k+1}} = (1-\omega)u_{H,1}^{n_k} +\omega  u_{H,1}^{n_{k+1}}$: 
    \begin{equation}\label{eq:partial_exp_case1}
    \begin{aligned}
     &\left( \frac{u_{H,1}^{n_{k+1}} -u_{H,1}^{n_k} }{\Delta T}, v_1 \right) + \left( \frac{u_{H,2}^{n_{k}} -u_{H,2}^{n_{k-1}} }{\Delta T}, v_1 \right) + a(u_{H,1}^{n_{k+1}} +u_{H,2}^{n_k}, v_1) = 0, \\
     &\left( \frac{u_{H,2}^{n_{k+1}} -u_{H,2}^{n_k} }{\Delta T}, v_2 \right) +\left( \frac{u_{H,1}^{n_k} -u_{H,1}^{n_{k-1}} }{\Delta T}, v_2 \right) + a( \bar{u}_{H,1}^{n_{k+1}}+u_{H,2}^{\rev{n_k}}, v_2) = 0,  
     \end{aligned}
    \end{equation}
    $\forall v_1 \in V_{H,1}, \forall v_2 \in V_{H,2}$.
    
    \item[]\textbf{Case 2 (coarse-fine)}: Coarse time step size for \eqref{eq:partial_exp1}, fine time step size for \eqref{eq:partial_exp2}, let $\bar{u}_{H,1}^{n_{k+1}} = (1-\omega)u_{H,1}^{n_k} +\omega  u_{H,1}^{n_{k+1}}$: 
    \begin{equation}\label{eq:partial_exp_case2}
    \begin{aligned}
     &\left( \frac{u_{H,1}^{n_{k+1}} -u_{H,1}^{n_k} }{\Delta T}, v_1 \right) + \left( \frac{u_{H,2}^{n_{k}} -u_{H,2}^{n_{k-1}} }{\Delta T}, v_1 \right) + a(u_{H,1}^{n_{k+1}} +u_{H,2}^{n_k}, v_1) = 0, \\
     &\left( \frac{u_{H,2}^{n+1} -u_{H,2}^n }{\Delta t}, v_2 \right) +\left( \frac{u_{H,1}^{n_k} -u_{H,1}^{n_{k-1}} }{\Delta T}, v_2 \right) + a( \bar{u}_{H,1}^{n_{k+1}}+u_{H,2}^{n}, v_2) = 0,  
     \end{aligned}
    \end{equation}
    $\forall v_1 \in V_{H,1}, \forall v_2 \in V_{H,2}$, and for $n = n_k, n_k+1, \cdots, n_{k+1}-1$.

    \item[]\textbf{Case 3 (fine-coarse)}: Fine time step size for \eqref{eq:partial_exp1}, coarse time step size for \eqref{eq:partial_exp2} 
    \begin{equation}\label{eq:partial_exp_case3}
    \begin{aligned}
     &\left( \frac{u_{H,1}^{n+1} -u_{H,1}^{n} }{\Delta t}, v_1 \right) + \left( \frac{u_{H,2}^{n_{k}} -u_{H,2}^{n_{k-1}} }{\Delta T}, v_1 \right) + a(u_{H,1}^{n+1} +u_{H,2}^{n_k}, v_1) = 0, \\
     &\left( \frac{u_{H,2}^{n_{k+1}} -u_{H,2}^{n_k} }{\Delta T}, v_2 \right) +\left( \frac{u_{H,1}^{n_k} -u_{H,1}^{n_{k-1}} }{\Delta T}, v_2 \right) + a(\bar{u}_{H,1}^{n_{k+1}}+u_{H,2}^{n_k}, v_2) = 0, 
     \end{aligned}
    \end{equation}
     $\forall v_1 \in V_{H,1}, \forall v_2 \in V_{H,2}$, and for $n = n_k, n_k+1, \cdots, n_{k+1}-1$.

    \item[]\textbf{Case 4 (fine-fine)}: Fine time step for \eqref{eq:partial_exp1}, fine time step for \eqref{eq:partial_exp2}. That is, take $\tau = \Delta t$ in both equations, let $\bar{u}_{H,1}^{n+1} = (1-\omega)u_{H,1}^{n} +\omega  u_{H,1}^{n+1}$: 
    \begin{equation}\label{eq:partial_exp_case4}
    \begin{aligned}
     &\left( \frac{u_{H,1}^{n+1} -u_{H,1}^{n} }{\Delta t}, v_1 \right) + \left( \frac{u_{H,2}^{n} -u_{H,2}^{n-1} }{\Delta \rev{t} }, v_1 \right) + a(u_{H,1}^{n+1} +u_{H,2}^{n}, v_1) = 0, \\
     &\left( \frac{u_{H,2}^{n+1} -u_{H,2}^{n} }{\Delta \rev{t}}, v_2 \right) +\left( \frac{u_{H,1}^{n} -u_{H,1}^{n-1} }{\Delta \rev{t}}, v_2 \right) + a( \bar{u}_{H,1}^{n+1}+u_{H,2}^{n}, v_2) = 0,  
     \end{aligned}
    \end{equation}
    $\forall v_1 \in V_{H,1}, \forall v_2 \in V_{H,2}$, and for $n = n_k, n_k+1, \cdots, n_{k+1}-1$
\end{itemize}
We remark that \rev{in the global scheme, since the above four cases may occur alternatively, if cases 1-3 are chosen in one coarse time interval and case 4 is chosen in the following interval, $u_{H,i}^{n}$ will not be defined at the fine time steps in the previous macro-step.} In this case, we use the linear interpolation of the nearest two coarse time step solutions $u_{H,i}^{n_{k+1}},u_{H,i}^{n_{k}}$ to define intermediate time step solutions  $u_{H,i}^{n}$ for $n_{k}<n<n_{k+1}$.

\subsection{Stability for different cases}\label{sec:stability}
Consider a coarse time block $(T_k, T_{k+1}]$, the stability of the multirate method for the above mentioned four cases is proved in this subsection.

Let $\gamma$ be a constant such that 
\begin{equation}\label{eq:gamma}
    \gamma =  \sup_{v_1 \in V_{H,1}, v_2 \in V_{H,2}} \frac{(v_1, v_2)}{\|v_1\|\|v_2\|} < 1.
\end{equation}
\rev{We remark that $\gamma$ can be estimated from the given spaces. }

For case 1 and case 4 defined in section \ref{sec:alg}, following a similar proof in \cite{cem-splitting}, the partially explicit scheme \eqref{eq:partial_exp1}-\eqref{eq:partial_exp2} is stable if 
\begin{equation*}
    \tau \sup_{v\in V_{H,2}} \frac{\norm{v}^2_a}{\norm{v}^2} \leq \frac{1-\gamma^2}{2-\omega},
\end{equation*}
and $\tau = \Delta T$ for case 1, $\tau = \Delta t$ for case 4. 

We will show the stability for case 2 and 3 in the following.

\subsubsection{Stability for case 2}

Use the coarse time step for $u_{H,1}$ and use the fine time step for $u_{H,2}$,

\begin{lemma}\label{lemma1}
The multirate partially explicit scheme in \eqref{eq:partial_exp_case2} satisfies the stability estimate
\begin{equation*}
    \frac{\gamma^2\Delta T}{2}\sum_{j=1}^2 \|  \frac{u_{H,j}^{n_{k+1}} -u_{H,j}^{n_k}}{\Delta T}\|^2 + \frac{1}{2}\|u_{H}^{n_{k+1}}\|_a^2 
   \leq  \frac{\gamma^2\Delta T}{2}\sum_{j=1}^2 \|  \frac{u_{H,j}^{n_{k}} -u_{H,j}^{n_{k-1}}}{\Delta T}\|^2 +\frac{1}{2}  \|u_{H}^{n_k}\|_a^2. 
\end{equation*}
if 
\begin{equation}\label{eq:stablility_case2}
    \Delta T \sup_{v\in V_{H,2}} \frac{ \|v\|_a^2}{\|v\|^2} \leq \frac{(1-\gamma^2)m}{m+1-m\omega}.
\end{equation}
\end{lemma}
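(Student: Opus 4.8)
The plan is to mimic the energy-estimate technique used for the standard partially explicit scheme (Cases 1 and 4), adapted to the fact that the explicit equation \eqref{eq:partial_exp2} is now subcycled with fine step $\Delta t$ while the implicit equation \eqref{eq:partial_exp1} uses the coarse step $\Delta T$. The central idea is to obtain a discrete energy identity by choosing the test functions to be the natural time-difference quotients of the solution, then to control the indefinite cross terms using the angle bound $\gamma$ from \eqref{eq:gamma} and absorb the destabilizing explicit contribution using the spectral bound on $\sup_{v\in V_{H,2}}\|v\|_a^2/\|v\|^2$.

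Concretely, first I would test the coarse implicit equation in \eqref{eq:partial_exp_case2} with $v_1 = u_{H,1}^{n_{k+1}} - u_{H,1}^{n_k}$ (equivalently $\Delta T$ times the difference quotient), producing a term $\|(u_{H,1}^{n_{k+1}}-u_{H,1}^{n_k})/\Delta T\|^2\,\Delta T$, a cross term coupling the $u_{H,2}$ difference against the $u_{H,1}$ difference, and an energy term $a(u_{H,1}^{n_{k+1}}+u_{H,2}^{n_k},\,u_{H,1}^{n_{k+1}}-u_{H,1}^{n_k})$. Next, for the fine explicit equation I would sum over the subcycling index $n=n_k,\dots,n_{k+1}-1$, testing each with $v_2 = u_{H,2}^{n+1}-u_{H,2}^{n}$; telescoping the inner products over the fine steps recovers the coarse difference $u_{H,2}^{n_{k+1}}-u_{H,2}^{n_k}$ in the cross terms, while the diffusion terms assemble into expressions involving $a(\cdot,u_{H,2}^{n+1}-u_{H,2}^n)$. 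I would then combine the two tested equations, using the standard algebraic identity $2(a-b,a)=\|a\|_a^2-\|b\|_a^2+\|a-b\|_a^2$ to convert the $a(\cdot,\cdot)$ contributions into the telescoping energy $\tfrac12\|u_H^{n_{k+1}}\|_a^2 - \tfrac12\|u_H^{n_k}\|_a^2$ plus nonnegative remainder terms, where $u_H = u_{H,1}+u_{H,2}$ and one uses the $a$-orthogonality or at least the splitting of the bilinear form across the two subspaces.

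The main obstacle, and the place where the specific bound \eqref{eq:stablility_case2} with its factor $m/(m+1-m\omega)$ will arise, is controlling the explicit lag term: because $u_{H,2}$ appears at the old time level in the diffusion term $a(\bar u_{H,1}^{n_{k+1}}+u_{H,2}^n, v_2)$ rather than at the new level, the discrete energy picks up an extra term of the form (roughly) $\Delta t\sum_n a(u_{H,2}^{n+1}-u_{H,2}^n,\,u_{H,2}^{n+1}-u_{H,2}^n)$ that must be dominated. Here I would invoke the inverse-type spectral inequality $\|v\|_a^2 \le \big(\sup_{w\in V_{H,2}}\|w\|_a^2/\|w\|^2\big)\|v\|^2$ valid for $v\in V_{H,2}$, to trade each such diffusion term for an $L^2$ norm of the difference quotient, which is exactly what appears on the left side of the claimed estimate. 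The factor $m$ enters because there are $m$ fine steps inside each coarse block and the $u_{H,1}$ difference is spread over the whole coarse interval while $u_{H,2}$ is updated $m$ times; tracking this bookkeeping carefully, together with the weight $(2-\omega)$-type contribution from $\bar u_{H,1}^{n_{k+1}}$ becoming $(m+1-m\omega)$ after the fine subcycling, is what produces the stated condition.

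The remaining work is then to show that, under \eqref{eq:stablility_case2}, the coefficient multiplying the difference-quotient energy stays nonnegative so that the destabilizing term is fully absorbed, leaving precisely the claimed monotone decay of the combined energy $\tfrac{\gamma^2\Delta T}{2}\sum_{j}\|(u_{H,j}^{n_{k+1}}-u_{H,j}^{n_k})/\Delta T\|^2 + \tfrac12\|u_H^{n_{k+1}}\|_a^2$. I would finish by verifying that the cross terms between $V_{H,1}$ and $V_{H,2}$ differences are controlled via $|(v_1,v_2)|\le\gamma\|v_1\|\|v_2\|$ and Young's inequality, which is where the $\gamma^2$ weight on the kinetic-energy part of the estimate comes from, consistent with the factor $1-\gamma^2$ on the right side of the stability threshold.
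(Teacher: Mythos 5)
Your proposal is correct and follows essentially the same route as the paper's proof: the same test functions ($v_1 = u_{H,1}^{n_{k+1}}-u_{H,1}^{n_k}$ for the coarse implicit equation, $v_2 = u_{H,2}^{n+1}-u_{H,2}^{n}$ summed over the $m$ fine substeps), the same telescoping energy identities, the same control of the cross terms via the angle bound $\gamma$ with Young's inequality, and the same absorption of the lagged explicit diffusion term through the spectral bound on $V_{H,2}$, which is precisely how the factor $(1-\gamma^2)m/(m+1-m\omega)$ emerges. No substantive gap; this is the paper's argument.
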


\begin{proof}
The equations in \eqref{eq:partial_exp_case2}  can be written as
\begin{equation}\label{eq:partial_exp_case2_1}
 \left( u_{H,1}^{n_{k+1}} -u_{H,1}^{n_k} + u_{H,2}^{n_{k}} -u_{H,2}^{n_{k-1}}, v_1 \right) =- \Delta T a(u_{H,1}^{n_{k+1}} +u_{H,2}^{n_k}, v_1),
\end{equation}
\begin{equation}\label{eq:partial_exp_case2_2}
\left( m(u_{H,2}^{n+1} -u_{H,2}^n)+u_{H,1}^{n_k} -u_{H,1}^{n_{k-1}} , v_2 \right)  =-  \Delta T a((1-\omega)u_{H,1}^{n_k} +\omega  u_{H,1}^{n_{k+1}} +u_{H,2}^{n}, v_2). 
\end{equation}

Take $v_1 = u_{H,1}^{n_{k+1}} -u_{H,1}^{n_k}$ in \eqref{eq:partial_exp_case2_1}, take $v_2 = u_{H,2}^{n+1} -u_{H,2}^{n}$ in \eqref{eq:partial_exp_case2_2} and sum over $n=n_k, n_k+1, \cdots, n_{k+1}-1$. Then for the left hand side of \eqref{eq:partial_exp_case2_1}, we get
\begin{equation*}
  \begin{aligned}
   &\left( u_{H,1}^{n_{k+1}} -u_{H,1}^{n_k} + u_{H,2}^{n_{k}} -u_{H,2}^{n_{k-1}},  u_{H,1}^{n_{k+1}} -u_{H,1}^{n_k} \right) \\
 & \geq  \|  u_{H,1}^{n_{k+1}} -u_{H,1}^{n_k}\|^2 -\gamma  \|u_{H,2}^{n_{k}} -u_{H,2}^{n_{k-1}}\| \| u_{H,1}^{n_{k+1}} -u_{H,1}^{n_k} \| \\
  & \geq \frac{1}{2} \|  u_{H,1}^{n_{k+1}} -u_{H,1}^{n_k}\|^2 -\frac{\gamma^2}{2} \|u_{H,2}^{n_{k}} -u_{H,2}^{n_{k-1}}\|^2
\end{aligned}  
\end{equation*}

For the left hand side of  \eqref{eq:partial_exp_case2_2}, we have
\begin{equation*}
\begin{aligned}
&\sum_{n=n_k}^{n_{k+1}-1}\left( m(u_{H,2}^{n+1} -u_{H,2}^n)+u_{H,1}^{n_k} -u_{H,1}^{n_{k-1}} ,  u_{H,2}^{n+1} -u_{H,2}^{n} \right) \\
&\geq   \sum_{n=n_k}^{n_{k+1}-1} \rev{m}\|u_{H,2}^{n+1} -u_{H,2}^n\|^2 - \frac{\gamma^2}{2} \|\rev{u_{H,1}^{n_{k}} -u_{H,1}^{n_{k-1}} } \|^2 - \frac{1}{2} \| \rev{u_{H,2}^{n_{k+1}} -u_{H,2}^{n_{k}}}\|^2 \\
&\geq  \frac{m}{2} \sum_{n=n_k}^{n_{k+1}-1} \|u_{H,2}^{n+1} -u_{H,2}^n\|^2 - \frac{\gamma^2}{2} \|\rev{u_{H,1}^{n_{k}} -u_{H,1}^{n_{k-1}} }\|^2
\end{aligned}    
\end{equation*}
since $\displaystyle{  - \frac{1}{2}\|  \rev{u_{H,2}^{n_{k+1}} -u_{H,2}^{n_{k}}}\|^2 \geq  - \frac{m}{2} \sum_{n=n_k}^{n_{k+1}-1} \|(u_{H,2}^{n+1} -u_{H,2}^n)\|^2 }$.

Sum up the right hand side of \eqref{eq:partial_exp_case2_1} and  \eqref{eq:partial_exp_case2_2}, we have
\begin{equation}\label{eq:case2_ineq1}
\begin{aligned}
   & - \Delta T a(u_{H,1}^{n_{k+1}} +u_{H,2}^{n_k}, u_{H,1}^{n_{k+1}} -u_{H,1}^{n_k} )  - (1-\omega) \Delta T a(u_{H,1}^{n_k}, u_{H,2}^{n_{k+1}} -u_{H,2}^{n_{k}} )\\
    &- \omega \Delta T a(u_{H,1}^{n_{k+1}} , u_{H,2}^{n_{k+1}} -u_{H,2}^{n_{k}} ) -\Delta T  \sum_{n=n_k}^{n_{k+1}-1} \rev{a}( u_{H,2}^{n}, u_{H,2}^{n+1} -u_{H,2}^{n}) \\
    =&   - \Delta T a(u_{H,1}^{n_{k+1}}, u_{H,1}^{n_{k+1}} -u_{H,1}^{n_k} )     +\Delta T a(u_{H,2}^{n_k}, u_{H,1}^{n_k} )   - \Delta T a(u_{H,1}^{n_{k+1}}, u_{H,2}^{n_{k+1}} ) \\
& + (1-\omega) \Delta T a(u_{H,1}^{n_{k+1}}-u_{H,1}^{n_k}, u_{H,2}^{n_{k+1}} -u_{H,2}^{n_{k}} ) -\Delta T  \sum_{n=n_k}^{n_{k+1}-1} \rev{a}( u_{H,2}^{n}, u_{H,2}^{n+1} -u_{H,2}^{n}) \\
& =: \text{RHS}
\end{aligned}
\end{equation}
Note that for the terms in RHS in the above inequalities, we have 
\begin{equation*}
\begin{aligned}
    & - a(u_{H,1}^{n_{k+1}}, u_{H,1}^{n_{k+1}} -u_{H,1}^{n_k} ) \rev{=} -\frac{ 1}{2} \left( \|u_{H,1}^{n_{k+1}}\|_a^2 + \|  u_{H,1}^{n_{k+1}} -u_{H,1}^{n_k}\|_a^2 - \|u_{H,1}^{n_k}\|_a^2 \right), \\
     &  \sum_{n=n_k}^{n_{k+1}-1} \rev{a} ( u_{H,2}^{n}, u_{H,2}^{n+1} -u_{H,2}^{n}) \rev{=} - \frac{1}{2}  \sum_{n=n_k}^{n_{k+1}-1} \left( \|u_{H,2}^{n}\|_a^2 + \|  u_{H,2}^{n+1} -u_{H,2}^{n}\|_a^2 - \|u_{H,2}^{n+1}\|_a^2 \right)\\
     & \hspace{12em} = - \frac{1}{2}  \left(\|u_{H,2}^{n_{k}}\|_a^2 - \|u_{H,2}^{n_{k+1}}\|_a^2  + \sum_{n=n_k}^{n_{k+1}-1}  \|  u_{H,2}^{n+1} -u_{H,2}^{n}\|_a^2  \right), \\
    &   a(u_{H,1}^{n_{k+1}}-u_{H,1}^{n_k}, u_{H,2}^{n_{k+1}} -u_{H,2}^{n_{k}} ) \leq \frac{1}{2}  \left(  \|  u_{H,1}^{n_{k+1}} -u_{H,1}^{n_k}\|_a^2 + \|  u_{H,2}^{n_{k+1}} -u_{H,2}^{n_k}\|_a^2 \right).
\end{aligned}
\end{equation*}
Substitute these into the left of \eqref{eq:case2_ineq1} and regroup terms, we get
\begin{equation*}
    \begin{aligned}
         \text{RHS}  & \leq -\frac{  \Delta T}{2}\|u_{H}^{n_{k+1}}\|_a^2  + \frac{  \Delta T}{2}  \|u_{H}^{n_k}\|_a^2 +  \frac{\Delta T}{2} \sum_{n=n_k}^{n_{k+1}-1}\|  u_{H,2}^{n+1} -u_{H,2}^{n}\|_a^2 \\
  & - \frac{\omega \Delta T}{2}   \|  u_{H,1}^{n_{k+1}} -u_{H,1}^{n_k}\|_a^2 + \frac{(1-\omega) \Delta T}{2}  \|  u_{H,2}^{n_{k+1}} -u_{H,2}^{n_k}\|_a^2. 
    \end{aligned}
\end{equation*}

Combine the results, we have 
\begin{equation*}
\begin{aligned}
    &\frac{\gamma^2}{2}\sum_{j=1}^2 \|  u_{H,j}^{n_{k+1}} -u_{H,j}^{n_k}\|^2 + \frac{1-\gamma^2}{2} \|  u_{H,1}^{n_{k+1}} -u_{H,1}^{n_k}\|^2  + \frac{m (1-\gamma^2)}{2}  \sum_{n=n_k}^{n_{k+1}-1} \|u_{H,2}^{n+1} -u_{H,2}^n\|^2 \\
   & + \frac{  \Delta T}{2}\|u_{H}^{n_{k+1}}\|_a^2  \leq  \frac{\Delta T (m+1-m\omega)}{2} \sum_{n=n_k}^{n_{k+1}-1}\|  u_{H,2}^{n+1} -u_{H,2}^{n}\|_a^2  +\frac{\Delta T}{2}  \|u_{H}^{n_k}\|_a^2  \\
   & + \frac{\gamma^2}{2}\sum_{j=1}^2 \|  u_{H,j}^{n_{k}} -u_{H,j}^{n_{k-1}}\|^2
\end{aligned}    
\end{equation*}
where we use the fact that $\displaystyle{ \|  u_{H,2}^{n_{k+1}} -u_{H,2}^{n_k}\|_a^2  \leq  m  \sum_{n=n_k}^{n_{k+1}-1}\|  u_{H,2}^{n+1} -u_{H,2}^{n}\|_a^2 }$.

As long as 
\begin{equation*}
    \frac{\Delta T (m+1-m\omega)}{2} \sum_{n=n_k}^{n_{k+1}-1}\|  u_{H,2}^{n+1} -u_{H,2}^{n}\|_a^2 \leq \frac{m (1-\gamma^2)}{2}  \sum_{n=n_k}^{n_{k+1}-1} \|u_{H,2}^{n+1} -u_{H,2}^n\|^2
\end{equation*}
we have
\begin{equation*}
     \frac{\gamma^2}{2}\sum_{j=1}^2 \|  u_{H,j}^{n_{k+1}} -u_{H,j}^{n_k}\|^2 + \frac{  \Delta T}{2}\|u_{H}^{n_{k+1}}\|_a^2 \leq \frac{\gamma^2}{2}\sum_{j=1}^2 \|  u_{H,j}^{n_{k}} -u_{H,j}^{n_{k-1}}\|^2 +\frac{  \Delta T}{2}  \|u_{H}^{n_k}\|_a^2.
\end{equation*}

Thus the stability condition is 
\begin{equation*}
    \Delta T \sup_{v\in V_{H,2}} \frac{ \|v\|_a^2}{\|v\|^2} \leq \frac{(1-\gamma^2)m}{m+1-m\omega}.
\end{equation*}
\end{proof}

We remark that, the stability condition becomes $\displaystyle{\Delta t  \sup_{v\in V_{H,2}} \frac{\|v\|_a^2}{\|v\|^2} \leq (1-\gamma^2) }$ if $\omega = 1$, which means we only need the fine time step size (for the explicit part) to satisfy the condition to ensure stability.

\subsubsection{Stability for case 3}

\begin{lemma}\label{lemma2}
The multirate partially explicit scheme in \eqref{eq:partial_exp_case3} satisfies the stability estimate
\begin{equation*}
    \frac{\gamma^2\Delta T}{2}\sum_{j=1}^2 \|  \frac{u_{H,j}^{n_{k+1}} -u_{H,j}^{n_k}}{\Delta T}\|^2 + \frac{1}{2}\|u_{H}^{n_{k+1}}\|_a^2 
   \leq  \frac{\gamma^2\Delta T}{2}\sum_{j=1}^2 \|  \frac{u_{H,j}^{n_{k}} -u_{H,j}^{n_{k-1}}}{\Delta T}\|^2 +\frac{1}{2}  \|u_{H}^{n_k}\|_a^2. 
\end{equation*}
if  
\begin{equation}\label{eq:stablility_case3}
   \Delta T  \sup_{v\in V_{H,2}} \frac{\|v\|_a^2}{\|v\|^2} \leq \frac{(1-\gamma^2)}{m-m\omega+1}.
\end{equation}

\end{lemma}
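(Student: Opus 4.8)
The plan is to reproduce, with the proper modifications, the energy argument used for Lemma~\ref{lemma1}, the essential difference being that here the implicit component $u_{H,1}$ is advanced on the fine grid while the explicit component $u_{H,2}$ evolves on the coarse step. First I would clear denominators by multiplying both equations of \eqref{eq:partial_exp_case3} by $\Delta T = m\Delta t$, so that the first equation becomes $(m(u_{H,1}^{n+1}-u_{H,1}^n) + u_{H,2}^{n_k}-u_{H,2}^{n_{k-1}}, v_1) = -\Delta T\,a(u_{H,1}^{n+1}+u_{H,2}^{n_k}, v_1)$ for each fine index $n$, and the second becomes $(u_{H,2}^{n_{k+1}}-u_{H,2}^{n_k}+u_{H,1}^{n_k}-u_{H,1}^{n_{k-1}}, v_2) = -\Delta T\, a((1-\omega)u_{H,1}^{n_k}+\omega u_{H,1}^{n_{k+1}}+u_{H,2}^{n_k}, v_2)$. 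I would then test the first equation with $v_1 = u_{H,1}^{n+1}-u_{H,1}^n$ and sum over $n=n_k,\dots,n_{k+1}-1$, test the second with $v_2 = u_{H,2}^{n_{k+1}}-u_{H,2}^{n_k}$, and add the two identities.

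For the mass (inner-product) side I would bound from below using the angle constant \eqref{eq:gamma}, Young's inequality, and the discrete Cauchy--Schwarz estimate $\|u_{H,1}^{n_{k+1}}-u_{H,1}^{n_k}\|^2 \le m\sum_{n}\|u_{H,1}^{n+1}-u_{H,1}^n\|^2$. Splitting each squared difference via $\tfrac12=\tfrac{\gamma^2}{2}+\tfrac{1-\gamma^2}{2}$, and using $\tfrac{m\gamma^2}{2}\sum_n\|u_{H,1}^{n+1}-u_{H,1}^n\|^2 \ge \tfrac{\gamma^2}{2}\|u_{H,1}^{n_{k+1}}-u_{H,1}^{n_k}\|^2$, the mass side should be bounded below by $\tfrac{\gamma^2}{2}\sum_{j}\|u_{H,j}^{n_{k+1}}-u_{H,j}^{n_k}\|^2 + \tfrac{m(1-\gamma^2)}{2}\sum_n\|u_{H,1}^{n+1}-u_{H,1}^n\|^2 + \tfrac{1-\gamma^2}{2}\|u_{H,2}^{n_{k+1}}-u_{H,2}^{n_k}\|^2 - \tfrac{\gamma^2}{2}\sum_j\|u_{H,j}^{n_k}-u_{H,j}^{n_{k-1}}\|^2$, which already yields the two history terms of the claimed estimate together with a useful $u_{H,2}$ dissipation $\tfrac{1-\gamma^2}{2}\|u_{H,2}^{n_{k+1}}-u_{H,2}^{n_k}\|^2$.

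For the stiffness side I would regroup the cross terms by exactly the same algebraic identity as in Lemma~\ref{lemma1} (the coupling terms are identical), rewriting them as $\Delta T\, a(u_{H,2}^{n_k},u_{H,1}^{n_k}) - \Delta T\, a(u_{H,1}^{n_{k+1}}, u_{H,2}^{n_{k+1}}) + (1-\omega)\Delta T\, a(u_{H,1}^{n_{k+1}}-u_{H,1}^{n_k}, u_{H,2}^{n_{k+1}}-u_{H,2}^{n_k})$, and then apply the polarization identity to the diagonal terms $a(u_{H,1}^{n+1}, u_{H,1}^{n+1}-u_{H,1}^n)$ and $a(u_{H,2}^{n_k}, u_{H,2}^{n_{k+1}}-u_{H,2}^{n_k})$. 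With $u_H=u_{H,1}+u_{H,2}$, the energy pieces assemble into $-\tfrac{\Delta T}{2}\|u_H^{n_{k+1}}\|_a^2 + \tfrac{\Delta T}{2}\|u_H^{n_k}\|_a^2$, and the leftovers are a negative implicit dissipation $-\tfrac{\Delta T}{2}\sum_n\|u_{H,1}^{n+1}-u_{H,1}^n\|_a^2$, a positive coarse term $\tfrac{\Delta T}{2}\|u_{H,2}^{n_{k+1}}-u_{H,2}^{n_k}\|_a^2$, and the mixed difference--difference term.

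The crux, and the step I expect to be the main obstacle, is this mixed term $(1-\omega)\Delta T\, a(u_{H,1}^{n_{k+1}}-u_{H,1}^{n_k}, u_{H,2}^{n_{k+1}}-u_{H,2}^{n_k})$. In Lemma~\ref{lemma1} the implicit variable sat on the coarse grid and its difference term was simply nonpositive; here $u_{H,1}$ is on the fine grid, so this term carries a genuinely positive $a$-norm of the \emph{coarse} $u_{H,1}$ increment that must be absorbed into the \emph{fine-scale} implicit dissipation. I would use a weighted Young inequality $a(A,B)\le \tfrac{\delta}{2}\|A\|_a^2+\tfrac{1}{2\delta}\|B\|_a^2$ with the precise weight $\delta=1/m$; combined with $\|u_{H,1}^{n_{k+1}}-u_{H,1}^{n_k}\|_a^2\le m\sum_n\|u_{H,1}^{n+1}-u_{H,1}^n\|_a^2$ this makes the total implicit contribution $-\tfrac{\omega\Delta T}{2}\sum_n\|u_{H,1}^{n+1}-u_{H,1}^n\|_a^2\le0$ (discardable), while the explicit side collects exactly $\tfrac{(m(1-\omega)+1)\Delta T}{2}\|u_{H,2}^{n_{k+1}}-u_{H,2}^{n_k}\|_a^2$. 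Since $u_{H,2}^{n_{k+1}}-u_{H,2}^{n_k}\in V_{H,2}$, the spectral bound $\|v\|_a^2\le(\sup_{v\in V_{H,2}}\|v\|_a^2/\|v\|^2)\|v\|^2$ with condition \eqref{eq:stablility_case3} lets this destabilizing term be absorbed by the $\tfrac{1-\gamma^2}{2}\|u_{H,2}^{n_{k+1}}-u_{H,2}^{n_k}\|^2$ from the mass side; dividing through by $\Delta T$ then gives the stated estimate. The delicate point is selecting the Young weight so that the fine-scale implicit dissipation exactly cancels the implicit increment and the constant $m(1-\omega)+1$ comes out sharp — a symmetric but inverted version of how the factor $m+1-m\omega$ arises in Lemma~\ref{lemma1}.
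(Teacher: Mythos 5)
Your proposal is correct and follows essentially the same argument as the paper's proof: the same test functions ($v_1=u_{H,1}^{n+1}-u_{H,1}^{n}$ summed over the fine steps, $v_2=u_{H,2}^{n_{k+1}}-u_{H,2}^{n_k}$), the same $\gamma$-based lower bound on the mass terms, and crucially the same weighted Young inequality with weight $1/m$ on the mixed term $(1-\omega)\Delta T\,a(u_{H,1}^{n_{k+1}}-u_{H,1}^{n_k},\,u_{H,2}^{n_{k+1}}-u_{H,2}^{n_k})$, combined with $\|u_{H,1}^{n_{k+1}}-u_{H,1}^{n_k}\|_a^2\le m\sum_n\|u_{H,1}^{n+1}-u_{H,1}^{n}\|_a^2$, so that the implicit increment is absorbed into the fine-scale dissipation (leaving $-\tfrac{\omega\Delta T}{2}\sum_n\|u_{H,1}^{n+1}-u_{H,1}^{n}\|_a^2\le 0$) and the explicit side carries exactly the constant $m-m\omega+1$. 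There is no gap; the step you flagged as the crux is handled precisely as in the paper.
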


\begin{proof}
The equations in \eqref{eq:partial_exp_case3}  can be written as
\begin{equation}\label{eq:partial_exp_case3_1}
 \left( m(u_{H,1}^{n+1} -u_{H,1}^{n}) + u_{H,2}^{n_{k}} -u_{H,2}^{n_{k-1}}, v_1 \right) =- \Delta T a(u_{H,1}^{n+1} +u_{H,2}^{n_k}, v_1),
\end{equation}
\begin{equation}\label{eq:partial_exp_case3_2}
\left( u_{H,2}^{n_{k+1}} -u_{H,2}^{n_k} +u_{H,1}^{n_k} -u_{H,1}^{n_{k-1}} , v_2 \right)  =-  \Delta T a((1-\omega)u_{H,1}^{n_k} +\omega  u_{H,1}^{n_{k+1}} +u_{H,2}^{n_k}, v_2),  
\end{equation}

Take $v_1 = u_{H,1}^{n+1} -u_{H,1}^{n}$ in \eqref{eq:partial_exp_case3_1} and sum over $n=n_k, n_k+1, \cdots, n_{k+1}-1$, also take $v_2 = u_{H,2}^{n_{k+1}} -u_{H,2}^{n_k}$ in \eqref{eq:partial_exp_case3_2} .

For the left hand side of \eqref{eq:partial_exp_case3_1}, we have
\begin{equation*}
\begin{aligned}
  &\sum_{n=n_k}^{n_{k+1}-1} \left( m(u_{H,1}^{n+1} -u_{H,1}^{n}) + u_{H,2}^{n_{k}} -u_{H,2}^{n_{k-1}}, u_{H,1}^{n+1} -u_{H,1}^{n} \right)  \\
  & =  \sum_{n=n_k}^{n_{k+1}-1}  m \|  u_{H,1}^{n+1} -u_{H,1}^{n}\|^2 + \left(u_{H,2}^{n_{k}} -u_{H,2}^{n_{k-1}},  u_{H,1}^{n_{k+1}} -u_{H,1}^{n_k} \right) \\
 & \geq  \frac{1}{2}\|  u_{H,1}^{n_{k+1}} -u_{H,1}^{n_k}\|^2 -\frac{\gamma^2}{2}  \|u_{H,2}^{n_{k}} -u_{H,2}^{n_{k-1}}\|^2
\end{aligned}
\end{equation*}
For the left hand side of  \eqref{eq:partial_exp_case3_2}, we have
\begin{equation*}
\left(  u_{H,2}^{n_{k+1}} -u_{H,2}^{n_k} +u_{H,1}^{n_k} -u_{H,1}^{n_{k-1}} ,   u_{H,2}^{n_{k+1}} -u_{H,2}^{n_k} \right) \geq   \frac{1}{2}\|  u_{H,2}^{n_{k+1}} -u_{H,2}^{n_k}\|^2 -\frac{\gamma^2}{2}  \|u_{H,1}^{n_{k}} -u_{H,1}^{n_{k-1}}\|^2
\end{equation*}

Sum up the right hand side of \eqref{eq:partial_exp_case3_1} and \eqref{eq:partial_exp_case3_2}, we have

\begin{equation*}
\begin{aligned}
     & - \Delta T  \sum_{n=n_k}^{n_{k+1}-1} a( \rev{u_{H,1}^{n+1}}, u_{H,1}^{n+1} -u_{H,1}^{n})   + (1-\omega) \Delta T a(u_{H,1}^{n_{k+1}} - u_{H,1}^{n_k} , u_{H,2}^{n_{k+1}} -u_{H,2}^{n_{k}} )  \\
    & - \Delta T \left(  a(u_{H,2}^{n_k}, u_{H,1}^{n_{k+1}} -u_{H,1}^{n_k})\rev{+} a(u_{H,1}^{n_{k+1}} , u_{H,2}^{n_{k+1}} -u_{H,2}^{n_{k}} ) \rev{+}a( u_{H,2}^{n_k}, u_{H,2}^{n_{k+1}} -u_{H,2}^{n_k})   \right) \\
         \leq & \rev{-\frac{ \Delta T}{2}  \sum_{n=n_k}^{n_{k+1}-1}\|u_{H,1}^{n+1} -u_{H,1}^{n}\|^2_a - \frac{\Delta T}{2} \| u_{H,1}^{n_{k+1}}\|_a^2+ \frac{\Delta T}{2} \|  u_{H,1}^{n_{k}}\|_a^2}\\
        & \rev{-\Delta T a(u_{H,2}^{n_k}, u_{H,1}^{n_k})+ \Delta T a(u_{H,1}^{n_{k+1}}, u_{H,2}^{n_{k+1}})}\\
    & \rev{+    \frac{(1-\omega)\Delta T}{2m} \| u_{H,1}^{n_{k+1}} -u_{H,1}^{n_k}\|_a^2 + \frac{m(1-\omega)\Delta T}{2} \|u_{H,2}^{n_{k+1}} -u_{H,2}^{n_{k}} \|_a^2}\\ 
     & \rev{+    \frac{\Delta T}{2} \left( \| u_{H,2}^{n_k}\|_a^2 + \|u_{H,2}^{n_{k+1}} -u_{H,2}^{n_{k}}\|_a^2 - \| u_{H,2}^{n_{k+1}}\|_a^2 \right)}\\ 
     \leq & \rev{ -  \frac{\omega\Delta T}{2m} \| u_{H,1}^{n_{k+1}} -u_{H,1}^{n_k}\|_a^2  - \frac{\Delta T}{2} \| u_H^{n_{k+1}}\|_a^2+ \frac{\Delta T}{2} \|  u_H^{n_{k}}\|_a^2}\\
     &\rev{+ \frac{(m-m\omega+1)\Delta T}{2} \|u_{H,2}^{n_{k+1}} -u_{H,2}^{n_{k}} \|_a^2}\\
    \leq &- \frac{\Delta T}{2} \| u_H^{n_{k+1}}\|_a^2+ \frac{\Delta T}{2} \|  u_H^{n_{k}}\|_a^2 + \frac{(m-m\omega+1)\Delta T}{2} \|u_{H,2}^{n_{k+1}} -u_{H,2}^{n_{k}} \|_a^2. \\
  \end{aligned}
\end{equation*}

Combine the results, we have 
\begin{equation*}
\begin{aligned}
&\frac{\gamma^2}{2}\sum_{i=1}^2 \|  u_{H,i}^{n_{k+1}} -u_{H,i}^{n_k}\|^2 + \frac{1-\gamma^2}{2}\sum_{i=1}^2 \|  u_{H,i}^{n_{k+1}} -u_{H,i}^{n_k}\|^2  +\frac{  \Delta T}{2}\|u_{H}^{n_{k+1}}\|_a^2 \\
    &\leq   \frac{\gamma^2}{2} \sum_{i=1}^2 \|u_{H,i}^{n_{k}} -u_{H,i}^{n_{k-1}}\|^2 + \frac{  \Delta T}{2}  \|u_{H}^{n_k}\|_a^2 + \frac{(m-m\omega+1)\Delta T}{2} \|u_{H,2}^{n_{k+1}} -u_{H,2}^{n_{k}} \|_a^2
\end{aligned}
\end{equation*}
The stability estimate  is then obtained by using the stability condition \eqref{eq:stablility_case3}.
\end{proof}


To this end, we formulate the main theorem as follows.

\begin{theorem}
    Let $0=T_0< T_1< \cdots<  T_{(N-1)/m} = T$ be a coarse partition of the time domain $(0,T]$, and $\Delta T$ be the coarse time step size. Using the multirate time stepping in all coarse block $(T_k, T_{k+1}]$ ($k=0, \cdots, (N-1)/m-1$), we will obtain a stable scheme if 
    \begin{equation*}
           \Delta T  \sup_{v\in V_{H,2}} \frac{\|v\|_a^2}{\|v\|^2} \leq (1-\gamma^2)
    \end{equation*}
    for $\omega = 1$. 
\end{theorem}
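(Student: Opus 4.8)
The plan is to reduce the global stability claim to the per-block energy estimates already established for the four cases, and then chain those estimates by induction over the coarse time blocks. I would introduce the coarse-level discrete energy
\begin{equation*}
E_k := \frac{\gamma^2 \Delta T}{2}\sum_{j=1}^2 \Big\| \frac{u_{H,j}^{n_k} - u_{H,j}^{n_{k-1}}}{\Delta T}\Big\|^2 + \frac{1}{2}\|u_H^{n_k}\|_a^2 ,
\end{equation*}
so that Lemma \ref{lemma1} and Lemma \ref{lemma2} read exactly as $E_{k+1}\le E_k$ on the block $(T_k,T_{k+1}]$ whenever Case 2 or Case 3 is used there, while the \cite{cem-splitting}-type argument quoted for Cases 1 and 4 yields the same inequality $E_{k+1}\le E_k$. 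Since $E_k \ge \tfrac12\|u_H^{n_k}\|_a^2$, a uniform bound $E_K\le E_0$ delivers $\|u_H^{n_K}\|_a^2 \le 2E_0$ for all $K$, which is the asserted stability.

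The first concrete step is to check that the single hypothesis $\Delta T \sup_{v\in V_{H,2}} \|v\|_a^2/\|v\|^2 \le 1-\gamma^2$ with $\omega=1$ implies the stability condition of every case. Setting $\omega = 1$ collapses the denominators: the Case 1 / Case 4 bound $\tau\sup \le (1-\gamma^2)/(2-\omega)$ becomes $\tau\sup\le 1-\gamma^2$; condition \eqref{eq:stablility_case2} becomes $\Delta T\sup \le m(1-\gamma^2)$; and condition \eqref{eq:stablility_case3} becomes $\Delta T\sup \le 1-\gamma^2$. Because $\Delta t = \Delta T/m \le \Delta T$ and $m\ge 1$, the hypothesis (which coincides with the binding Case 1 / Case 3 condition) is the most restrictive of the four and therefore implies all of them simultaneously.

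Given that the case-wise conditions are in force, whichever of the four schemes the adaptive multirate procedure selects on a given block $(T_k,T_{k+1}]$, the corresponding energy inequality $E_{k+1}\le E_k$ holds. Since the choice is made block by block and the inequality carries the same form and the same energy functional $E_k$ in all four cases, a straightforward induction on $k$ telescopes the estimates to $E_K \le E_0$ for every $K$ (with the usual convention for the initial difference at $k=0$), completing the argument.

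The step I expect to be the main obstacle is confirming that the four per-block estimates genuinely share one common, consistently normalized energy, so that the telescoping across block interfaces is legitimate. Cases 1, 2 and 3 update $u_{H,1}$ (or both components) at the coarse nodes, so $E_k$ is expressed directly through the coarse differences $u_{H,j}^{n_k}-u_{H,j}^{n_{k-1}}$; the fine--fine Case 4, however, produces its natural bound \emph{per fine step} with a $\Delta t$ normalization, and one must aggregate those fine-step inequalities over the block to recover the coarse-level form of $E_k$. The device for this is the convexity bound $\|u_{H,2}^{n_{k+1}}-u_{H,2}^{n_k}\|^2 \le m\sum_{n=n_k}^{n_{k+1}-1}\|u_{H,2}^{n+1}-u_{H,2}^n\|^2$ used in the proof of Lemma \ref{lemma1}, together with the linear interpolation convention for undefined intermediate values. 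Once these normalizations are matched at the block boundaries, the remainder is the routine induction described above.
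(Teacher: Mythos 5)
Your proposal is correct and follows essentially the same route as the paper, whose entire proof is the one-line remark that the result follows from Lemma \ref{lemma1} and Lemma \ref{lemma2}: you verify that with $\omega=1$ the hypothesis $\Delta T \sup_{v\in V_{H,2}}\|v\|_a^2/\|v\|^2\le 1-\gamma^2$ is the binding condition among \eqref{eq:stablility_case2}, \eqref{eq:stablility_case3} and the case 1/4 bound, and then telescope the common per-block energy inequality. In fact your write-up is more careful than the paper's, since you also flag and resolve (via the convexity bound and the interpolation convention) the normalization mismatch at block interfaces for the fine--fine case, a point the paper silently glosses over.
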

This result can be easily obtained by Lemma \ref{lemma1} and Lemma \ref{lemma2}.

\textit{Remark}: We know that the time step size of explicit methods for the parabolic equation  scales as $H^2/\max(\kappa)$, where $\kappa$ is the diffusion coefficient. With the construction of basis for $V_{H,2}$ in section \ref{sec:V2}, we can demonstrate that the term $\sup_{v\in V_{H,2}} \frac{\|v\|_a^2}{\|v\|^2}$ in the stability condition is contrast independent. To illustrate the idea, we consider a simplified case, let $K_i$ be a square coarse element with only one vertical channel in the middle of the block. Let $K_{i,m}^1$ be the region on the left of the channel $K_{i,f}$, and $K_{i,m}^2$ be the region on the right of the channel $K_{i,f}$. Let $(x_1, y_1)$ and $(x_2, y_2)$ be the coordinates of the bottom-left and top-right vertices in $K_{i,m}^2$. Define a bubble function $B_i(x,y) \in C_{0}^{\infty} (K_i)$ such that 
\begin{align*}
    B_i(x,y) &= 0 \text{ in } K_{i,f} \cup K_{i,m}^1, \\
    B_i(x,y) &= \frac{64}{H^4}(x-x_1)(x_2-x)(y-y_1)(y_2-y).
\end{align*}
Then we have $\|B_i\|_{L^\infty} = 1$, and $\|\nabla B_i\|_{L^\infty} \leq CH^{-1}$. 

We now show that $\|w\|_{a(K_i)} \leq D H^{-1} \|w\|_{L^2(K_i)}$ for $w \in V_{H,2}(K_i)$. Take $w \in V_{H,2}(K_i)$, we have 
\begin{align*}
       a_i(w, v) + \mu^{(i)}_1 \int_{K_{i,f}} v  + \mu^{(i)}_2 \int_{K_{i,m}} v &=0, \quad \forall v \in V_0(K_i) \\
    \int_{K_{i,f}} w= 0, \quad \int_{K_{i,m}} w   &=  \pi_i(w),\\
\end{align*}
where $\pi_i$ is the projection from $L^2(K_i)$ to $V_{\text{aux},2}(K_i)$. Thus we have 
\begin{equation*}
    a_i(w,w) = -\mu^{(i)}_2 \int_{K_{i,m}} w \leq C_1 \|\mu^{(i)}_2\|_{L^2(K_i)} \|w\|_{L^2(K_i)}.
\end{equation*}
On the other hand, let $v=B_i \mu^{(i)}_2$ with $B_i$ defined above, we have 
\begin{align*}
     \|\mu^{(i)}_2\|_{L^2(K_i)}^2  &\leq C_2 \int_{K_{i,m}} B_i (\mu^{(i)}_2)^2\\
     &= -C_2 a_i(w, B_i \mu^{(i)}_2) \\
     & \leq  C_3 \|w\|_{a(K_i)} \|B_i \mu^{(i)}_2\|_{a(K_i)}\\
     &\leq D H^{-1}\|w\|_{a(K_i)} \|\mu^{(i)}_2\|_{L^2(K_i)}.
\end{align*}
Combine the results, we have $\|w\|_{a(K_i)} \leq D H^{-1}  \|w\|_{L^2(K_i)}$.

We remark that this idea can be extended to more general $K_i$ with one smooth channel in it by appropriate coordinate transformation.

As for the constant $\gamma$, we can observe that it is strictly less than 1. Let $v_1 \in V_{H,1}$ and $v_2 \in V_{H,2}$, and $P: L^2 \rightarrow V_{\text{aux},1}+V_{\text{aux},2}$ be a projection operator such that $P (v) = \sum_{i} \frac{1}{|K_{i,m}|}\int_{K_{i,m}} v + \sum_{j} \frac{1}{|K_{j,f}|}\int_{K_{j,f}} v$, then we have
	\begin{equation*}
	    (v_1, v_2) = (P v_1, P v_2) + ((I-P)v_1, (I-P)v_2) = ((I-P)v_1, (I-P)v_2),
	\end{equation*}
	since $(P v_1, P v_2) = 0$.
	Thus, 
	\begin{align*}
	    \frac{(v_1, v_2) }{\|v_1\|_{L^2} \|v_2\|_{L^2}}=\frac{((I-P)v_1, (I-P)v_2)}{\|v_1\|_{L^2} \|v_2\|_{L^2}} 
	    \leq \frac{\|(I-P)v_1\|_{L^2} }{\|v_1\|_{L^2} }\frac{\|(I-P)v_2\|_{L^2} }{\|v_2\|_{L^2} } < 1
	\end{align*}
	since $\|(I-P)v_i\|_{L^2}^2 <\|(I-P)v_i\|_{L^2}^2+\|P v_i\|_{L^2}^2 = \|v_i\|_{L^2}^2$.
	Actually, since $v_i$ solves the constraint minimizing problem with the energy $a(v_i,v_i)$ minimized, $v_1$ and $v_2$ then minimizes the oscillation in the fractured region and matrix region, respectively. On the other hand, $P v_i$ is the piecewise constant representing the average of $v_i$ in the corresponding region. Thus, $\|(I-P)v_i\|_{L^2}$ is relatively small compared with $\|P v_i\|_{L^2}$, and $\frac{\|(I-P)v_i\|_{L^2} }{\|v_i\|_{L^2}}$ should be away from $1$. 

\rev{We remark that our proposed method provides an adaptive time refinement strategy which focuses on the case when the temporal error is large (for example, at the short-time simulation period or when the source term is changing). In the long term simulation, the spatial error will dominate no matter which time discritization scheme is employed. To satisfy the stability condition, the time step size in our partially explicit scheme should be suitably coupled to the mesh parameter. Though the purely implicit scheme without splitting is unconditionally stable, the proposed multirate approach still has some advantages: (1) If the temporal error is large, a smaller time step size is needed to reduce the error. In this case, the partially explicit scheme is computational faster compared with the purely implicit scheme. (2) If the temporal error is relatively small compared with the spatial error, our error indicators will decide not to refine the time step size. As long as the coarse time step size satisfy the stability condition, the partially explicit scheme is preferable in terms of computational efficiency. (3) If a very large time step is employed, one can still use the proposed splitting method with implicit discretization in both equations. In this case, the stability can be proved in a similar fashion, and the multirate method we designed can still be employed to reduce the error. }

\subsection{Adaptive multirate algorithm based on the residual} \label{sec:main_method}
In this section, we will propose a new adaptive multirate algorithm to select a suitable time step size for the implicit-explicit scheme. The idea is to derive an error indicator based on residuals, the indicators will give an estimate of the errors if we use coarse time discretization for both equations in implicit-explicit scheme \eqref{eq:partial_exp1}-\eqref{eq:partial_exp2}. Then one can adaptively refine the time step size for the part whose error is large. We first show the derivation of the error estimators, and then present our main adaptive algorithm.
Let $U(t)$ be the piecewise linear function with $U(T^n) = u_H^n = u_{H,1}^n + u_{H,2}^n$ such that on $(T_n, T_{n+1}]$ 
\begin{equation*}
    U(t) = u_H^n + \frac{t-T_n}{\Delta T} (u_H^{n+1}-u_H^n),
\end{equation*}
and $F(t)$ be the piecewise constant such that on each time interval $(T_n, T_{n+1}]$, it is equal to the $L^2$-projection of $f^{n+1}$ onto the multiscale space $V_H$, i.e.
\[(F(t), v) = (f^{n+1}, v)\]
for all $v\in V$.

We introduce the space 
\begin{equation*}
    X_n = L^2((T_n, T_{n+1}];H^1(\Omega)) 
\end{equation*}
and define 
\begin{equation*}
    (u,v)_{X_n} =  \int_{T_n}^{T_{n+1}} \left(\int_{\Omega} u v + \int_{\Omega} \kappa \nabla u \cdot  \nabla v  \right) dt, 
\end{equation*}
\begin{equation*}
    \|v\|_{X_n} = \left( \int_{T_n}^{T_{n+1}} \left( \|v\|^2 + \| v\|^2_a  \right) dt  \right)^{\frac{1}{2}}, 
\end{equation*}
where $\|v\|^2_a  = \|\kappa \nabla v\|^2$. \rev{We remark that the space $X_n$ is a common choice of space for parabolic problems, and $\|v\|_{X_n}$ is the associated norm \cite{vohralik2013posteriori}. }

Define a constant $\gamma_x$ depending on $V_{H,1}$ and $V_{H,2}$ as
\rev{
  \begin{equation}\label{eq:gammax}
  \gamma_x = \sup_{\substack{ v_1 \in L^2((T_n, T_{n+1}];V_{H,1}),\\ v_2 \in L^2((T_n, T_{n+1}];V_{H,2}) }} \frac{(v_1, v_2)_{X_n}}{||v_1||_{X_n} ||v_2||_{X_n} } < 1.
  \end{equation} }


Let the \rev{integral of the residual over the time interval $(T_n, T_{n+1}]$} be
\rev{
\begin{equation*}
  \int_{T_n}^{T_{n+1}} (R(U(t)), v) dt = \int_{T_n}^{T_{n+1}} (f(t), v)dt - \int_{T_n}^{T_{n+1}} (U'(t), v) dt- \int_{T_n}^{T_{n+1}} a(U(t), v)dt,
\end{equation*}
}
\rev{and define two dual norms of the residual 
\begin{align*}
    \mathcal{R}_v &= \sup_{\substack{ v \in L^2((T_n, T_{n+1}];V_{H}),\\ \|v\|_{X_n} = 1}} \int_{T_n}^{T_{n+1}}(R(U(t)), v) dt,\\
    \mathcal{R}_x &= \sup_{v \in X_n, \|v\|_{X_n} = 1} \int_{T_n}^{T_{n+1}}(R(U(t)), v) dt,
\end{align*}}
then we have the following estimates.

\begin{theorem}
    Define the following error indicators\\
    \textbf{Type 1}:
    \begin{equation}\label{eq:type1}
    \begin{aligned}
    \eta_1^n = \sqrt{\frac{\Delta T }{3}} (\| u_{H,1}^{n+1} -u_{H,1}^n\|_a + \rev{\gamma}\| u_{H,2}^{n+1} -u_{H,2}^n\|_a) +  \Delta T^{\frac{3}{2}} \|\partial^2_t  u_{H,2}^{n+1} \|,\\
           \eta_2^n = \rev{\gamma} \sqrt{C_{\omega}\Delta T }\| u_{H,1}^{n+1} -u_{H,1}^n\|_a + \frac{\sqrt{\Delta T}}{3}  \| u_{H,2}^{n+1} -u_{H,2}^n\|_a  +  \Delta T^{\frac{3}{2}} \|\partial^2_t  u_{H,1}^{n+1} \|.
    \end{aligned}
    \end{equation}
            \textbf{Type 2}:
        \begin{equation}\label{eq:type2}
    \begin{aligned}
        \eta_1^n = \sqrt{\frac{\Delta T }{3}}  ( \| u_{H,1}^{n+1} -u_{H,1}^n\|_a +\rev{\gamma} \| u_{H,2}^{n+1} -u_{H,2}^n\|_a) +  \Delta T^{\frac{3}{2}} \|\partial^2_t  u_{H,2}^{n+1} \|_{a^*},\\
           \eta_2^n = \rev{\gamma} \sqrt{C_{\omega}\Delta T }\| u_{H,1}^{n+1} -u_{H,1}^n\|_a +  \frac{\sqrt{\Delta T}}{3}  \| u_{H,2}^{n+1} -u_{H,2}^n\|_a  +  \Delta T^{\frac{3}{2}} \|\partial^2_t  u_{H,1}^{n+1} \|_{a^*}.
        \end{aligned}
      \end{equation}
      where $\displaystyle{C_{\omega}=(\frac{1}{3} + \omega^2- \omega)}$, and \rev{$\gamma$ is the constant defined in \eqref{eq:gamma}}.
      
      Then there exists constant $D_1, D_2$ such that 
      \begin{equation}
       \rev{\mathcal{R}_v} \leq D_1 (1-\gamma_x)^{-\frac{1}{2}} \left((\eta_1^n)^2 +(\eta_2^n)^2  \right)^{\frac{1}{2}}  + D_2 \|f(t)-F(t)\|_{L^2((T_n,T_{n+1}], L^2(\Omega))}
      \end{equation}
\end{theorem}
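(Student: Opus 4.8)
The plan is to bound $\mathcal{R}$ by combining three ingredients: a data-oscillation replacement of $f(t)$ by its piecewise-constant projection $F(t)$, a Galerkin-type cancellation that uses the discrete scheme \eqref{eq:partial_exp1}--\eqref{eq:partial_exp2} tested against time-dependent functions, and the strengthened Cauchy--Schwarz inequality \eqref{eq:gammax} in the $X_n$ inner product to decouple the $V_{H,1}$ and $V_{H,2}$ contributions. Since the indicators are meant to estimate the error incurred when coarse steps are used for both components, I would work with the coarse--coarse discretization. Throughout I write $\theta = (t-T_n)/\Delta T \in [0,1]$, so that on $(T_n,T_{n+1}]$ one has $U(t) = u_H^n + \theta(u_H^{n+1}-u_H^n)$ and $U'(t) = (u_H^{n+1}-u_H^n)/\Delta T$ is constant in $t$, with $u_H^{n}=u_{H,1}^{n}+u_{H,2}^{n}$.

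First I would peel off the data term on the whole residual: $\int_{T_n}^{T_{n+1}}(R(U),v)\,dt = \int_{T_n}^{T_{n+1}}(F-U'-\mathcal{A}U,v)\,dt + \int_{T_n}^{T_{n+1}}(f-F,v)\,dt$, where the last integral is controlled by $\|f-F\|_{L^2((T_n,T_{n+1}],L^2(\Omega))}\,\|v\|_{X_n}$ and produces the $D_2$ term. For the modified residual with source $F$, I fix $v=v_1+v_2$ with $v_1(\cdot,t)\in V_{H,1}$, $v_2(\cdot,t)\in V_{H,2}$, split the integral into a $v_1$-piece and a $v_2$-piece, and in each piece test the corresponding discrete equation (equation \eqref{eq:partial_exp1} against $v_1(\cdot,t)$, equation \eqref{eq:partial_exp2} against $v_2(\cdot,t)$) at the level $k=n$ and integrate in $t$. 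Because $(F(t),v_j)=(f^{n+1},v_j)$, subtracting this integrated equation cancels the source and all step-$(n\!+\!1)$ contributions.

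The algebraic heart of the argument is the simplification of what remains. Since $U'$ carries the synchronous increment $(u_{H,1}^{n+1}-u_{H,1}^n)+(u_{H,2}^{n+1}-u_{H,2}^n)$ while the scheme carries only the implicit increment plus a lagged explicit increment, the temporal terms collapse to a second difference: the $v_1$-piece leaves $-(u_{H,2}^{n+1}-2u_{H,2}^n+u_{H,2}^{n-1},v_1)/\Delta T = -\Delta T(\partial_t^2 u_{H,2}^{n+1},v_1)$, and symmetrically the $v_2$-piece leaves $-\Delta T(\partial_t^2 u_{H,1}^{n+1},v_2)$. The spatial terms reduce to $a(\,\cdot\,,v_j)$ applied to the explicitly computable combinations $(1-\theta)(u_{H,1}^{n+1}-u_{H,1}^n)-\theta(u_{H,2}^{n+1}-u_{H,2}^n)$ for $v_1$ and $(\omega-\theta)(u_{H,1}^{n+1}-u_{H,1}^n)-\theta(u_{H,2}^{n+1}-u_{H,2}^n)$ for $v_2$. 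Estimating term by term with Cauchy--Schwarz in time yields exactly the stated weights, since $\int_0^1(1-\theta)^2\,d\theta=\int_0^1\theta^2\,d\theta=\tfrac13$ and $\int_0^1(\omega-\theta)^2\,d\theta=\tfrac13+\omega^2-\omega=C_\omega$, together with $\int_{T_n}^{T_{n+1}}\|v_j\|^2\,dt\le\|v_j\|_{X_n}^2$ and its $a$-norm analogue; the cross terms $a(\cdot,\cdot)$ between the two subspaces are absorbed by the constant $\alpha$, a strengthened Cauchy--Schwarz bound for $a$ on $V_{H,1}\times V_{H,2}$ chosen $\ge\gamma$. This gives $\int_{T_n}^{T_{n+1}}(R(U),v_j)\,dt\le\eta_j^n\|v_j\|_{X_n}$. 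Type~1 and Type~2 correspond to measuring the second-difference term either in $L^2$ (tested against $\|v_j\|$, giving $\|\partial_t^2 u\|$) or in the $a$-dual norm $\|\cdot\|_{a^*}$ (tested against $\|v_j\|_a$), which simultaneously shifts the $\alpha$-weight between the two spatial contributions.

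Finally I would combine the pieces as $\int_{T_n}^{T_{n+1}}(R(U),v)\,dt\le\eta_1^n\|v_1\|_{X_n}+\eta_2^n\|v_2\|_{X_n}\le\bigl((\eta_1^n)^2+(\eta_2^n)^2\bigr)^{1/2}\bigl(\|v_1\|_{X_n}^2+\|v_2\|_{X_n}^2\bigr)^{1/2}$, and invoke \eqref{eq:gammax} in the form $\|v\|_{X_n}^2\ge(1-\gamma_x)(\|v_1\|_{X_n}^2+\|v_2\|_{X_n}^2)$ to replace the last factor by $(1-\gamma_x)^{-1/2}\|v\|_{X_n}$; taking the supremum over $\|v\|_{X_n}=1$ and absorbing universal constants into $D_1,D_2$ gives the claim. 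I expect the main obstacle to be the staggered explicit term: reconciling the scheme's lagged increment $u_{H,2}^n-u_{H,2}^{n-1}$ with the interpolant's increment $u_{H,2}^{n+1}-u_{H,2}^n$ so that the leftover is a genuine second difference controllable by $\partial_t^2$, and choosing its norm ($L^2$ versus $a^*$) consistently so that the companion spatial weights and the factor $\alpha$ emerge exactly as in the two indicator types.
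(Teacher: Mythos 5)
Your proposal is correct and follows essentially the same route as the paper's proof: peel off the data-oscillation term $\|f-F\|$, cancel the source by testing the coarse--coarse scheme against $v_1,v_2$, reduce the leftovers to the interpolation-weighted spatial terms and the second differences $\partial_t^2 u_{H,j}^{n+1}$, apply Cauchy--Schwarz in time (yielding the weights $1/3$ and $C_\omega$, with $\alpha\ge\gamma$ absorbing cross terms), and finish with the $\ell^2$ combination and the strengthened Cauchy--Schwarz constant $\gamma_x$ in $X_n$. The only (immaterial) discrepancy is your weight $(\omega-\theta)$ versus the paper's $(\omega-\tfrac{T_{n+1}-t}{\Delta T})$; your form is the algebraically exact one, and both integrate to $C_\omega$ by the symmetry $\theta\mapsto 1-\theta$.
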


\begin{proof}
By definition, we have
  \begin{equation*}
\begin{aligned}
  &\rev{ \int_{T_n}^{T_{n+1}}} (R(U(t)), v)  dt = \rev{\int_{T_n}^{T_{n+1}} } \bigg[ (f(t), v) -(f^{n+1}, v) - (U'(t), v) -a(U(t), v)+  \\
  & \left( \frac{u_{H,1}^{n+1} -u_{H,1}^n }{\Delta T}, v_1 \right) + \left( \frac{u_{H,2}^{n} -u_{H,2}^{n-1} }{\Delta T}, v_1 \right)  + a(u_{H,1}^{n+1} +u_{H,2}^{n}, v_1)+ \\
  &\left( \frac{u_{H,2}^{n+1} -u_{H,2}^n }{\Delta T}, v_2 \right) +\left( \frac{u_{H,1}^{n} -u_{H,1}^{n-1} }{\Delta T}, v_2 \right) + a((1-\omega)u_{H,1}^{n} +\omega  u_{H,1}^{n+1} +u_{H,2}^{n}, v_2)  \bigg] dt.
  \end{aligned}
  \end{equation*}
 By the definition of $U(t)$, we have $U'(t) = \frac{u_H^{n+1}-u_H^n}{\Delta T}$, then
 \begin{equation*}
 \begin{aligned}
        & (U'(t), v) -\left( \frac{u_{H,1}^{n+1} -u_{H,1}^n }{\Delta T}, v_1 \right) - \left( \frac{u_{H,2}^{n+1} -u_{H,2}^n }{\Delta T}, v_2 \right)\\
     &=\left( \frac{u_{H,1}^{n+1} -u_{H,1}^n }{\Delta T}, v_2 \right)  +\left( \frac{u_{H,2}^{n+1} -u_{H,2}^n }{\Delta T}, v_1\right).
 \end{aligned}
 \end{equation*}
 Further, we have
 \begin{equation*}
     \begin{aligned}
  & u_{H,1}^{n+1} +u_{H,2}^{n}-U(t) = \frac{T_{n+1}-t}{\Delta T} (  u_{H,1}^{n+1} -u_{H,1}^{n}) - \frac{t-T_{n}}{\Delta T} (  u_{H,2}^{n+1} -u_{H,2}^{n}),\\
   &(1-\omega)u_{H,1}^{n} +\omega  u_{H,1}^{n+1} +u_{H,2}^{n}-U(t)   \\
  = & (\omega-\frac{T_{n+1}-t}{\Delta T}) (  u_{H,1}^{n+1} -u_{H,1}^{n})- \frac{t-T_{n}}{\Delta T} (  u_{H,2}^{n+1} -u_{H,2}^{n}).
  \end{aligned}  
 \end{equation*}
 Thus, we can write
\begin{equation*}
\rev{ \int_{T_n}^{T_{n+1}}} (R(U(t)), v)) dt = \rev{ \int_{T_n}^{T_{n+1}}} \left[ (f(t), v) -(f^{n+1}, v)  + (R_1,v_1) +(R_2,v_2)\right] dt \end{equation*}
where
\begin{equation*}
  \begin{aligned}
    \rev{ \int_{T_n}^{T_{n+1}}} (R_1,v_1)dt &= \rev{\int_{T_n}^{T_{n+1}}} [ \frac{\rev{T}_{n+1} - t}{\Delta T } a(u_{H,1}^{n+1} -u_{H,1}^{n}, v_1)  - \frac{t-\rev{T_n}}{\Delta T} a(u_{H,2}^{n+1} -u_{H,2}^{n}, v_1) ] dt\\
    & -\rev{ \int_{T_n}^{T_{n+1}}} \Delta T  \left( \frac{u_{H,2}^{n+1} -2u_{H,2}^n +u_{H,2}^{n-1} }{\Delta T^2}, v_1\right)dt \\
     \rev{ \int_{T_n}^{T_{n+1}}} (R_2,v_2)dt &= \rev{ \int_{T_n}^{T_{n+1}}} \big( \omega- \frac{\rev{T}_{n+1} - t}{\Delta T} \big) a(u_{H,1}^{n+1} -u_{H,1}^{n}, v_2)dt\\
     &-  \rev{ \int_{T_n}^{T_{n+1}}} \frac{t-\rev{T}_n}{\Delta T } a(u_{H,2}^{n+1} -u_{H,2}^{n}, v_2) dt\\
     & -\rev{ \int_{T_n}^{T_{n+1}}}\Delta T \left( \frac{u_{H,1}^{n+1} -2u_{H,1}^n + u_{H,1}^{n-1}}{\Delta T^2}, v_2 \right)dt.
\end{aligned}  
\end{equation*}
Integrate from $T_n$ to $T_{n+1}$, we get

\begin{equation}\label{eq:ineq_R1}
\begin{aligned}
     & \int_{T_n}^{T_{n+1}} (R_1, v_1) dt  \leq C_1 \sqrt{\frac{\Delta T }{3}} \| u_{H,1}^{n+1} -u_{H,1}^n\|_a \left( \int_{T_n}^{T_{n+1}} \|v_1\|_a^2 dt\right)^{\frac{1}{2}} \\
     &\ + C_2 \rev{\gamma} \sqrt{\frac{\Delta T }{3}}  \| u_{H,2}^{n+1} -u_{H,2}^n\|_a \left( \int_{T_n}^{T_{n+1}} \|v_1\|_a^2 dt\right)^{\frac{1}{2}}   + C_3 \Delta T^{\frac{3}{2}}  E(\partial^2_t  u_{H,2}^{n+1}, v_1),
 \end{aligned}
 \end{equation}
 \begin{equation}\label{eq:ineq_R2}
\begin{aligned}
   & \int_{T_n}^{T_{n+1}} (R_2, v_2)dt   \leq C_1' \rev{\gamma}  \sqrt{(\frac{1}{3} + \omega^2- \omega) \Delta T} \| u_{H,1}^{n+1} - u_{H,1}^n\|_a \left( \int_{T_n}^{T_{n+1}} \|v_2\|_a^2 dt \right)^{\frac{1}{2}} \\
     & + C_2' \sqrt{\frac{\Delta T }{3}}  (\| u_{H,2}^{n+1} -u_{H,2}^n\|_a \left( \int_{T_n}^{T_{n+1}} \|v_2\|_a^2 dt\right)^{\frac{1}{2}}   + C_3' \Delta T^{\frac{3}{2}}  E(\partial^2_t  u_{H,1}^{n+1}, v_2),
\end{aligned}
    \end{equation}
where \rev{$\gamma$ is }defined in \eqref{eq:gamma}.

In the above inequalities, $\partial^2_t(u_{H,i}^{n+1}) = \frac{u_{H,i}^{n+1} - 2u_{H,i}^{n}+u_{H,i}^{n-1}}{\Delta T^2}$, which stands for the approximation of second derivative with respect to time, and we have
\begin{equation*}
  \begin{aligned}
E(\partial^2_t  u_{H,2}^{n+1}, v_1) &= \|\partial^2_t  u_{H,2}^{n+1} \|^2    \left( \int_{T_n}^{T_{n+1}} \|v_1\|^2 dt\right)^{\frac{1}{2}},\\
 E(\partial^2_t  u_{H,1}^{n+1}, v_2)&=   \|\partial^2_t  u_{H,1}^{n+1} \|^2    \left( \int_{T_n}^{T_{n+1}} \|v_2\|^2 dt\right)^{\frac{1}{2}}
\end{aligned}  
\end{equation*}
to derive the first type of indicators $\eta_1$, $\eta_2$ as defined in \eqref{eq:type1}, or 
\begin{equation*}
  \begin{aligned}
E(\partial^2_t  u_{H,2}^{n+1}, v_1) &= \|\partial^2_t  u_{H,2}^{n+1} \|_{a^*}   \left( \int_{T_n}^{T_{n+1}} \|v_1\|_a^2 dt\right)^{\frac{1}{2}},\\
 E(\partial^2_t  u_{H,1}^{n+1}, v_2)&=   \|\partial^2_t  u_{H,1}^{n+1} \|_{a^*}   \left( \int_{T_n}^{T_{n+1}} \|v_2\|_a^2 dt\right)^{\frac{1}{2}}
\end{aligned}  
\end{equation*}
to derive the second type of indicators $\eta_1$, $\eta_2$ in \eqref{eq:type2}. Here $||\cdot||_{a^*}$ is the dual norm.

Add the two inequalities in \eqref{eq:ineq_R1} and \eqref{eq:ineq_R2} together and by the definition of $X_n$ norm, for both types of indicators, we have
\begin{equation*}
\begin{aligned}
     \int_{T_n}^{T_{n+1}} (R_1, v_1) dt+ \int_{T_n}^{T_{n+1}} (R_2, v_2)dt &\leq  C\eta_1^n \|v_1\|_{X_n} + C' \eta_2^n \|v_2\|_{X_n}   \\
    &\leq D_1 \left( (\eta_1^n)^2 +(\eta_2^n)^2  \right)^{\frac{1}{2}}\left(  \|v_1\|_{X_n}^2 +\|v_2\|_{X_n}^2  \right)^{\frac{1}{2}} \\
    &\leq D_1 (1-\gamma_x)^{-\frac{1}{2}} \left((\eta_1^n)^2 +(\eta_2^n)^2  \right)^{\frac{1}{2}}    \|v\|_{X_n},
\end{aligned}
\end{equation*}
where $\eta_1^n$ and $\eta_2^n$ are defined in \eqref{eq:type1} or \eqref{eq:type2}. In the last step of the above derivation, we use the fact that
\begin{equation*}
    (v_1, v_2)_{X_n} \leq \gamma_x \|v_1\|_{X_n} \|v_2\|_{X_n}
\end{equation*}
by the definition \eqref{eq:gammax}, which indicates
\begin{equation*}
\begin{aligned}
    \|v_1+v_2\|_{X_n}^2 &\geq \|v_1\|_{X_n}^2 + \|v_2\|_{X_n}^2 -2\gamma_x \|v_1\|_{X_n} \|v_2\|_{X_n}\\
    & \geq (1-\gamma_x) \left( \|v_1\|_{X_n}^2 + \|v_2\|_{X_n}^2  \right).
\end{aligned}
\end{equation*}
%

Finally, take the sup with respect to $v\in X_n$, we have the following estimate
\begin{equation*}
\begin{aligned}
   \rev{\mathcal{R}_x} &  \leq D_1 (1-\gamma_x)^{-\frac{1}{2}} \left((\eta_1^n)^2 +(\eta_2^n)^2  \right)^{\frac{1}{2}}  + D_2 \|f(t)-F(t)\|_{L^2((T_n,T_{n+1}], L^2(\Omega))}.
\end{aligned}
\end{equation*}
 \rev{It is obvious that $\mathcal{R}_v\leq \mathcal{R}_x$. On the other hand, we observe that
 \begin{equation*}
     \int_{T_n}^{T_{n+1}} (R(U(t)), v)) dt =  \int_{T_n}^{T_{n+1}} (R(U(t)), v-\Pi v)) dt + \int_{T_n}^{T_{n+1}} (R(U(t)), \Pi v)) dt,
 \end{equation*}
 where $\Pi: H_0^1(\Omega) \rightarrow V_H$ is a projection operator. Thus 
 \begin{align*}
     &\sup_{v \in X_n} \frac{\int_{T_n}^{T_{n+1}}(R(U(t)), v) dt}{\|v\|_{X_n}} \\
     \leq &\sup_{v \in L^2((T_n, T_{n+1}];V_{H}^{\perp})} \frac{\int_{T_n}^{T_{n+1}}(R(U(t)), v) dt }{\|v\|_{X_n}} + \sup_{v \in L^2((T_n, T_{n+1}];V_{H})} \frac{\int_{T_n}^{T_{n+1}}(R(U(t)), v) dt }{\|v\|_{X_n}}, 
 \end{align*}
 this implies 
 \begin{equation*}
      \mathcal{R}_x \leq \sup_{v \in L^2((T_n, T_{n+1}];V_{H}^{\perp})} \frac{\int_{T_n}^{T_{n+1}}(R(U(t)), v) dt }{\|v\|_{X_n}} + \mathcal{R}_v.
 \end{equation*}
We assume that the space $V_{H}$ provides a good approximation to $V$, then the supremum term over $L^2((T_n, T_{n+1}];V_{H}^{\perp})$ will be relatively small compared to $\mathcal{R}_v$.}
 \end{proof}



\rev{\textit{Remark}: In this work, we considered the fractured/channelized media with high contrast, and we assume the proposed multiscale space $V_{H}$ is good enough to approximate solutions in space. Our aim is to handle the error in the time discretization effectively via a multirate approach. For more general model problems with highly heterogeneous permeabilities, we will consider enriching the spatial approximation by constructing additional multiscale basis in our future work.}

\rev{\textit{Remark}: We defined two types of error indicators, the difference between the two lies in the norm of the term $\partial^2_t  u_{H,1}^{n+1}$. Both types have advantages and disadvantages. For type 1,  the computation of the indicators is more straightforward, but we may need different scales of the threshold parameters for $\delta_1$ and $\delta_2$ in practice (this can be observed in the numerical examples in section \ref{sec:numerical}. On the other hand, for type 2, the threshold parameters for $\delta_1$ and $\delta_2$ can be chosen consistently, but the computation for the dual norm of $\partial^2_t  u_{H,i}^{n+1}$ is less straightforward.}

\subsection{Main algorithm} \label{sec:alg}

In this part, we present the adaptive multi-time-step algorithm. At the beginning of the procedure, we solve the problem at the coarse time resolution. The coarse time step size is set to guarantee the stability of the scheme. Then we conduct refinement for the part of the equations \eqref{eq:partial_exp1}-\eqref{eq:partial_exp2} to the fine time resolution according to the error indicators and user-defined thresholds. If needed, the refinement will be implemented inside the current coarse block, and the solutions at the newest coarse time instance will be replaced. Then the time grid will be set back to the coarse resolution for both equations and the solver will march forward. The procedure will be performed iteratively \rev{until} the simulation is done.

In the following, the fine time step size is $\Delta t$, the coarse time step size is $\Delta T$, and  $\Delta T =m \Delta t$. The total number of coarse time steps is $N$. Let $\text{dim}(V_{H,1}) = d_1$, $\text{dim}(V_{H,2}) = d_2$, $\text{dim}(V_h) = D$, and let $\Psi_1 \in \mathbb{R}^{D\times d_1}$ and $\Psi_2 \in \mathbb{R}^{D\times d_2}$ be the matrices whose columns are the bases of $V_{H,1}$, $V_{H,2}$, respectively.
Let $M_f$ and $A_f$ be the fine scale mass matrix and stiffness matrix,
define the following coarse scale matrices
\begin{equation*}
\begin{aligned}
    M_{H,1} &=  \Psi_1^T M_f \Psi_1, \quad A_{H,1} = \Psi_1^T A_f \Psi_1,\\
    M_{H,2} &=  \Psi_2^T M_f \Psi_2, \quad A_{H,2} = \Psi_2^T A_f \Psi_2,\\
    M_{H,12} &=  \Psi_1^T M_f \Psi_2, \quad A_{H,12} = \Psi_2^T A_f \Psi_2,\\
    F_{H,1}^n & =  \Psi_1^T f^n, \quad \quad \quad   F_{H,2}^n =  \Psi_2^T f^n  
\end{aligned}
\end{equation*}
Let $U^n_{1,H}$ and $U^n_{2,H}$ be the coarse scale solution at time step $n$. Then the matrix equations can be displayed as 

    \begin{equation}\label{eq:partial_exp_mat1}
    \left(M_{H,1} + \tau A_{H,1} \right)U^{k+1}_{H,1}  = 
     M_{H,1} U^{k}_{H,1} + M_{H,12} (U^{k-1}_{H,2} -U^{k}_{H,2}) - \tau A_{H,12} 
    U^{k}_{H,2} + \tau F_{H,1}^{k+1},
    \end{equation}
\begin{equation}\label{eq:partial_exp_mat2}
\begin{aligned}
   M_{H,2} U^{k+1}_{H,2}  &= \left(M_{H,2}- \tau A_{H,2} \right)U^{k}_{H,2} + M_{H,12}^T (U^{k-1}_{H,1} -U^{k}_{H,1}) \\
   &- (1-\omega) \tau A_{H,12}^T U^{k}_{H,1} - \omega \tau A_{H,12}^T U^{k+1}_{H,1} +  \tau F_{H,2}^{k+1}.   
\end{aligned}
\end{equation}


Our proposed method can be summarized in the algorithm \ref{alg:adaptive}. 

\begin{algorithm}
 \caption{Adaptive multirate algorithm for partially explicit temporal splitting scheme }
    \begin{algorithmic}[1]
    \Procedure{Adaptive multirate}{Thresholds $\delta_1$, $\delta_2$, Initial condition $u_0$} 
    \State Define matrices $\Psi_1 \in \mathbb{R}^{D\times d_1}$, $\Psi_1 \in \mathbb{R}^{D\times d_2}$ using multiscale basis in $V_{H,1}$, $V_{H,2}$
    
    \ForAll{$k = 1: m$}
        \State $\tau \gets \Delta t$
        \State  Solve equation \eqref{eq:standard_scheme}
     \EndFor
    
    \State $U_{H,1}^0 \gets \Psi_1^T u_0$, $U_{H,2}^0 \gets \Psi_2^T u_0$  
    \State $U_{H,1}^1 \gets \Psi_1^T u_H^m$, $U_{H,2}^1 \gets \Psi_2^T u_H^m$ 
    
    \ForAll{$k = 1: N-1$}
        \State $\tau \gets \Delta T$ 
        \State Solve equations \eqref{eq:partial_exp_mat1} - \eqref{eq:partial_exp_mat2} 
        \State Save $U_{H,1}^{k+1}$ and $U_{H,2}^{k+1}$
        \State Compute $\rev{\eta_1^k}$, $\rev{\eta_2^k}$ from \eqref{eq:type1} or \eqref{eq:type2}
     	\If{ $\rev{\eta_1^k} > \delta_1$ and $\rev{\eta_2^k} < \delta_2$ }
     	    \State $\tilde{U}_{H,1}^0 \gets U_{H,1}^k$
    		\State Replace $k$ with $j$, denote $\tilde{U}_{H,1}^j = U_{H,1}^j$ in \eqref{eq:partial_exp_mat1}
    		\ForAll{$j = 1: m$}
    		    \State Solve equation \eqref{eq:partial_exp_mat1} with $\tau = \Delta t$ 
    		\EndFor
    		\State Replace $U_{H,1}^{k+1}$ with $\tilde{U}_{H,1}^{m+1}$
    		
   		 \ElsIf{$\rev{\eta_1^k} < \delta_1$ and $\rev{\eta_2^k} > \delta_2$ }
   		 	\State $\tilde{U}_{H,2}^0 \gets U_{H,2}^k$
    		\State Replace $k$ with $j$, let $\tilde{U}_{H,j}^2 = U_{H,j}^2$ in \eqref{eq:partial_exp_mat2}
    		\ForAll{$j = 1: m$}
    		    \State Solve equation \eqref{eq:partial_exp_mat2} with $\tau = \Delta t$ 
    		\EndFor
    	    \State Replace $U_{H,2}^{k+1}$ with $\tilde{U}_{H,2}^{m+1}$
    	 \ElsIf{$\rev{\eta_1^k} > \delta_1$ and $\rev{\eta_2^k} > \delta_2$ }
    	    
    	    \State $\tilde{U}_{H,0}^1 \gets U_{H,k}^1$, $\tilde{U}_{H,0}^2 \gets U_{H,k}^2$
    		\State Replace $k$ with $j$, let $\tilde{U}_{H,j}^1 = U_{H,j}^1$ in \eqref{eq:partial_exp_mat1} and $\tilde{U}_{H,j}^2 = U_{H,j}^2$ in \eqref{eq:partial_exp_mat2}
    		\ForAll{$j = 1: m$}
    		    \State Solve equations \eqref{eq:partial_exp_mat1}-\eqref{eq:partial_exp_mat2} with $\tau = \Delta t$ 
    		\EndFor
    		\State Replace $U_{H,1}^{k+1}$ with $\tilde{U}_{H,1}^{m+1}$
    		\State Replace $U_{H,2}^{k+1}$ with $\tilde{U}_{H,2}^{m+1}$
    		
       	\EndIf 

    \EndFor
    \State $u^N_{H,1} \gets \Psi_1 U_{H,1}^N $, $u^N_{H,2} \gets \Psi_2 U_{H,2}^N $
    \State \textbf{return}  $u^N_{H} = u^N_{H,1} + u^N_{H,2}$
    \EndProcedure
    \end{algorithmic}    \label{alg:adaptive}
\end{algorithm}

\section{Numerical examples} \label{sec:numerical}
In this section, we will present some numerical tests and demonstrate the performance of the proposed algorithm.
Consider the parabolic equation on a unit square domain $\Omega = [0,1]\times [0,1]$. Let the coarse mesh size be $H=0.1$ and the fine mesh size be $h=0.01$. We apply zero Dirichlet boundary conditions and zero initial conditions in the following examples. The reference solutions are computed using an even finer time discretization with Crank-Nicolson scheme, and the spatial discretizations were as discussed in Section \ref{sec:spaces}. \rev{We expect that our method has the spatial convergence rate $O(H)$ in the energy norm since the space $V_H = V_{H,1} + V_{H,2}$ is originated from NLMC \cite{cem-gmsfem, NLMC, zhao2020analysis}, and the temporal convergence rate is $O(\Delta T)$. Please refer to appendix \ref{appendix:thm} for a sketch of proof for spatial error.}

\subsection{Example 1: time-independent smooth source term}
In the first example, we use a smooth source term $f(x,y) = 2\pi^2 \sin(\pi x)\sin(\pi y) \exp(-(x-0.5)^2 -(y-0.5)^2 )$. The configuration of the permeability field can be found in Figure \ref{fig:source_perm_ex1}. The value of permeability is $10^4$ in the channel, and $1$ in the background.

The total simulation time $T=0.05$. The coarse time step size is $\Delta T = 10^{-4}$ and the fine time step size is $\Delta t = 10^{-5}$. We use the Crank-Nicolson scheme with $\delta t = 10^{-6}$ to compute reference solutions. The comparison of solutions at the different time steps are presented in Figure \ref{fig:sol_ex1}, where we have reference solutions on the left column, and the solutions on the right column are obtained from our proposed method to adaptively refine temporal mesh based on residuals. 

The errors (evaluated at coarse time instances) are shown in Figure \ref{fig:errs_ex1_type1} for type 1 when we take $\delta_1 = 1.5\times 10^{-4}$, $\delta_2 = 5\times 10^{-6}$, and we compare the results using uniform fine time discretization, using uniform coarse time discretization and using adaptive time refinement discretization, correspondingly. The refinement indicators in the right of Figure \ref{fig:errs_ex1_type1} demonstrate that the algorithm automatically chooses different time step for the two equations \eqref{eq:partial_exp1} and \eqref{eq:partial_exp2}. 
We note that the error history of our proposed method decays fast at first and chooses to refine the time step for both equations in the partially explicit scheme. Then the algorithm gets back to coarse for equation \eqref{eq:partial_exp1} and still refines equation \eqref{eq:partial_exp2} for a while. Finally, it stabilizes to the coarse-coarse case at the latter part of the simulation. The number of refined coarse blocks is around 92/477. 
The average mean $L^2$ error across all time steps is $0.0104\%$ and the energy error is $0.0568\%$. As a reference, the fine-fine errors are $L^2$/energy errors are $0.0074\% /0.0323\%$, and the coarse-coarse errors are $0.0764\%/ 0.33041\%$, correspondingly.
\rev{Similar results are obtained in Figure \ref{fig:errs_ex1_type2_2} for the second type of error indicators, and in this case, $\delta_1 = \delta_2 = 5\times 10^{-6}$. We remark that, the thresholds need to be chosen differently for different types in order to get desirable results. We can see from Figure \ref{fig:errs_ex1_type1_2} and \ref{fig:errs_ex1_type2_2} that, when we use $\delta_1 = \delta_2 = 5\times 10^{-6}$ for both types, type 2 performs well, but the thresholds are too small for type 1 such that it is over-refined. If we use $\delta_1 = 1.5\times 10^{-4}$, $\delta_2 = 5\times 10^{-6}$ for both types, as seen from Figure \ref{fig:errs_ex1_type1} and \ref{fig:errs_ex1_type2}, type 1 performs well, but the thresholds are too large for type 2 such that the errors are closer to coarse + coarse case. }
During this finite-time simulation, our method outperforms the coarse-coarse method in terms of accuracy. Moreover, it converges to the fine-fine case fast and is \rev{computationally much cheaper}.

In the end, we show the mean errors when we choose different threshold parameters $\delta_1$, $\delta_2$ in Table \ref{tab:errs_ex1_deltadiff_type1}. We observe that as $\delta$ decreases, the errors are getting closer to the fine-fine case.

\begin{figure}
    \centering
    \includegraphics[width=1.0\textwidth]{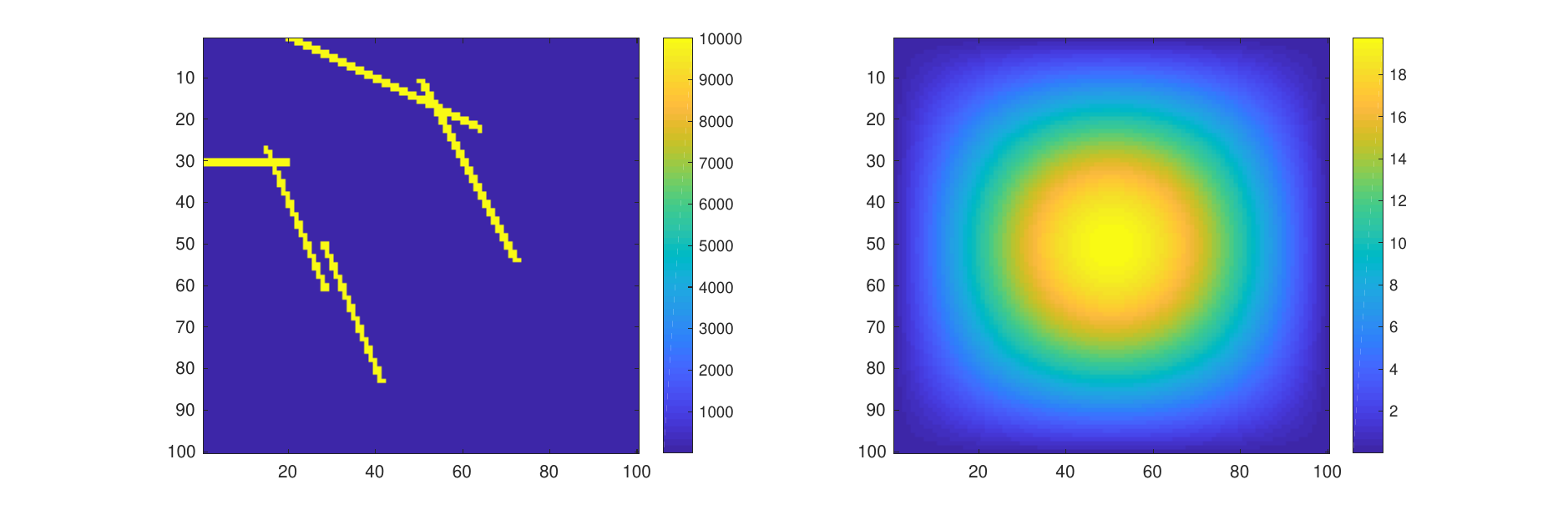}
    \caption{Example 1, left: permeability field, right: source term.}
    \label{fig:source_perm_ex1}
\end{figure}

\begin{figure}
    \centering
    \includegraphics[width=0.8\textwidth]{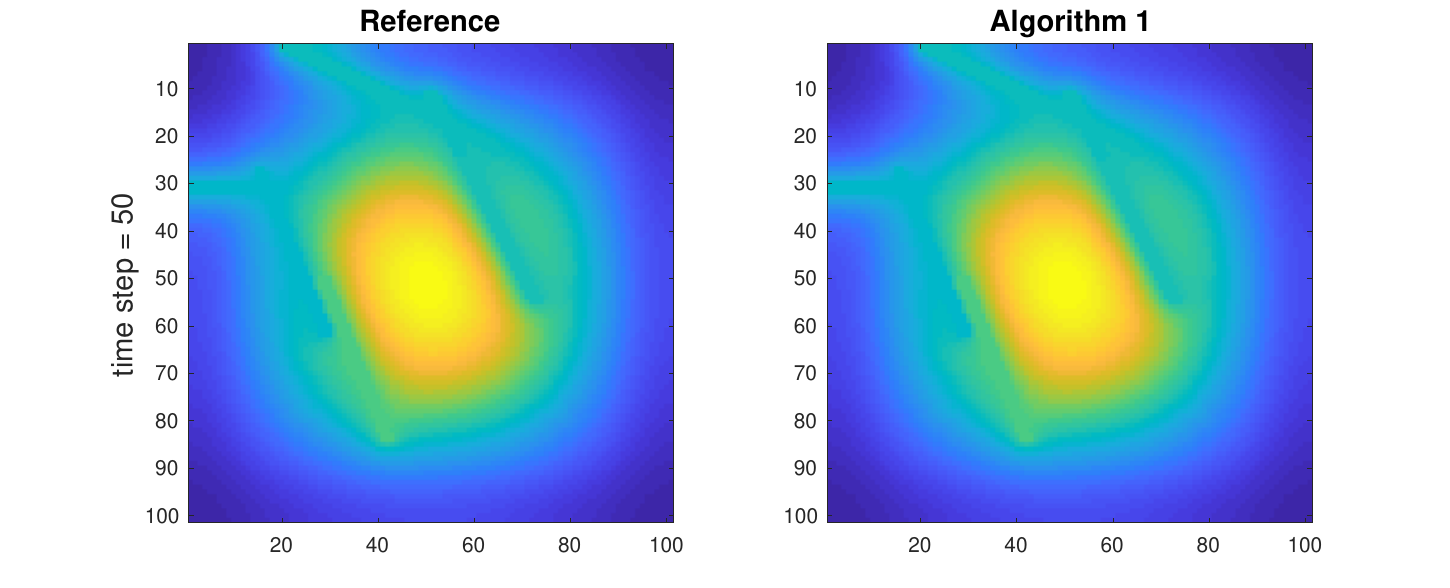}
    \includegraphics[width=0.8\textwidth]{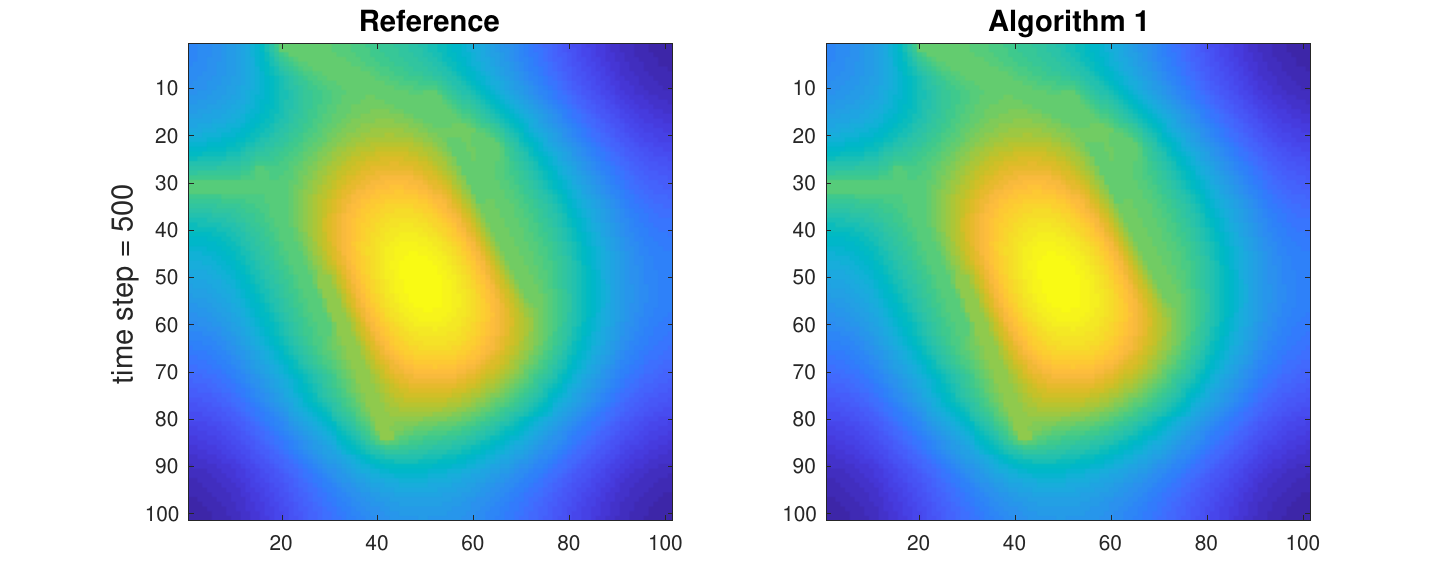}
    \caption{Example 1, the comparison of solutions at different time steps. Left: reference solutions, right: solutions obtained from the proposed algorithm.}
    \label{fig:sol_ex1}
\end{figure}

\begin{figure}
    \centering
    \includegraphics[width=1.0\textwidth]{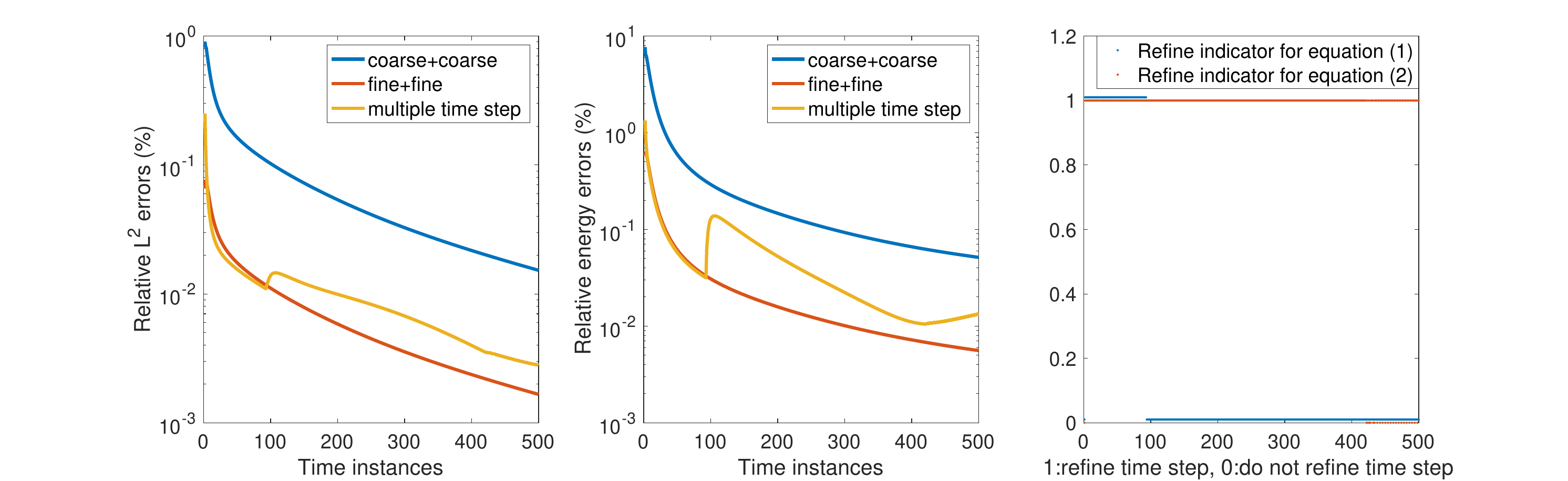}
    \caption{Example 1, using type 1 error indicators. \rev{$\delta_1 = 1.5\times 10^{-4}$, $\delta_2 = 5\times 10^{-6}$. Left and middle}: error history, right: refinement history for two equations. The number of refined steps for the first equation is 92, for the second equation is 477. \rev{The mean $L^2$ error is 0.0104\%, and the energy error is 0.0568\%.}}
    \label{fig:errs_ex1_type1} 
\end{figure}
          


\begin{table}[]
    \centering
    \begin{tabular}{|l||*{3}{c|}}\hline
    \multicolumn{4}{|c|}{Mean errors ($L^2$/energy error)} \\ \hline
    \diagbox[width=5em]{$\delta_2$}{$\delta_1$}
    &\makebox[5em]{$1\times 10^{-4}$}&\makebox[5em]{$2\times 10^{-4}$}&\makebox[5em]{$5\times 10^{-4}$} \\\hline\hline
    $5\times 10^{-6}$ &(0.0086/ 0.0442)&(0.0113/ 0.0693) &(0.0199/ 0.1496)\\\hline
    $1\times 10^{-5}$ &(0.0109/ 0.0507)&(0.0113/ 0.0759) &(0.0220/ 0.155)\\\hline
    $1\times 10^{-4}$ &(0.0164/ 0.0627)&(0.0259/ 0.0979) &(0.0508/ 0.2069)\\\hline
    $5\times 10^{-4}$ &(0.0449/ 0.1520)&(0.0442/ 0.1512) &(0.0558/ 0.2091)\\\hline
        \multicolumn{4}{|c|}{\# of refinement steps (for eqn. \eqref{eq:partial_exp1}/ for eqn. \eqref{eq:partial_exp2})} \\ \hline
    \diagbox[width=5em]{$\delta_2$}{$\delta_1$}
    &\makebox[5em]{$1\times 10^{-4}$}&\makebox[5em]{$2\times 10^{-4}$}&\makebox[5em]{$5\times 10^{-4}$} \\\hline\hline
    $5\times 10^{-6}$ &(145, 477)&(62, 477) &(9, 477)\\\hline
    $1\times 10^{-5}$ &(145, 253)&(62, 254) &(9, 254)\\\hline
    $1\times 10^{-4}$ &(190, 146)&(75, 76) &(10, 16)\\\hline
    $5\times 10^{-4}$ &(191, 26)&(93, 26) &(19, 10)\\\hline
    \end{tabular}
    \caption{Example 1, top: average error over all time steps using type 1 error indicators with different error thresholds, the errors are in percentage; bottom: the number of refinement steps for equation \eqref{eq:partial_exp1}/ for equation \eqref{eq:partial_exp2}, respectively. References: fine-fine errors are 0.0074/0.0323; coarse-coarse errors are 0.0764/ 0.33041.}
    \label{tab:errs_ex1_deltadiff_type1}
\end{table}

\subsection{Example 2: time-independent singular source term}
In the second example, the configuration of the permeability field and the point source term $f(x,y)$ can be found in Figure \ref{fig:source_perm_ex2}. Similar to before, the conductivity is $10^4$ in the channel and $1$ in the background.

We set the total simulation time to be $T=0.02$. The number of coarse-scale time steps is $2000$ and the number of fine-scale time steps is $20000$. Again, the reference solutions are computed at a finer time scale with $200000$ steps using the Crank-Nicolson scheme. The comparison of \rev{solutions at different time steps} computed from different combinations of time scales is presented in Figure \ref{fig:sol_ex2}. 

The errors at coarse time instances are shown in Figure \ref{fig:errs_ex2_type1} when we use the first type of error indicators, similar behavior can be observed when we use the second type of error indicators. We can see that the errors of our proposed method decays fast and are similar to the fine-fine time step size for the partially explicit scheme. The refinement indicators in the right of Figure \ref{fig:errs_ex2_type1} demonstrate that to get comparable results, we only need $36$ refining steps for the implicit part and $639$ refining steps for the explicit part. 


\begin{figure}
    \centering
    \includegraphics[width=1.0\textwidth]{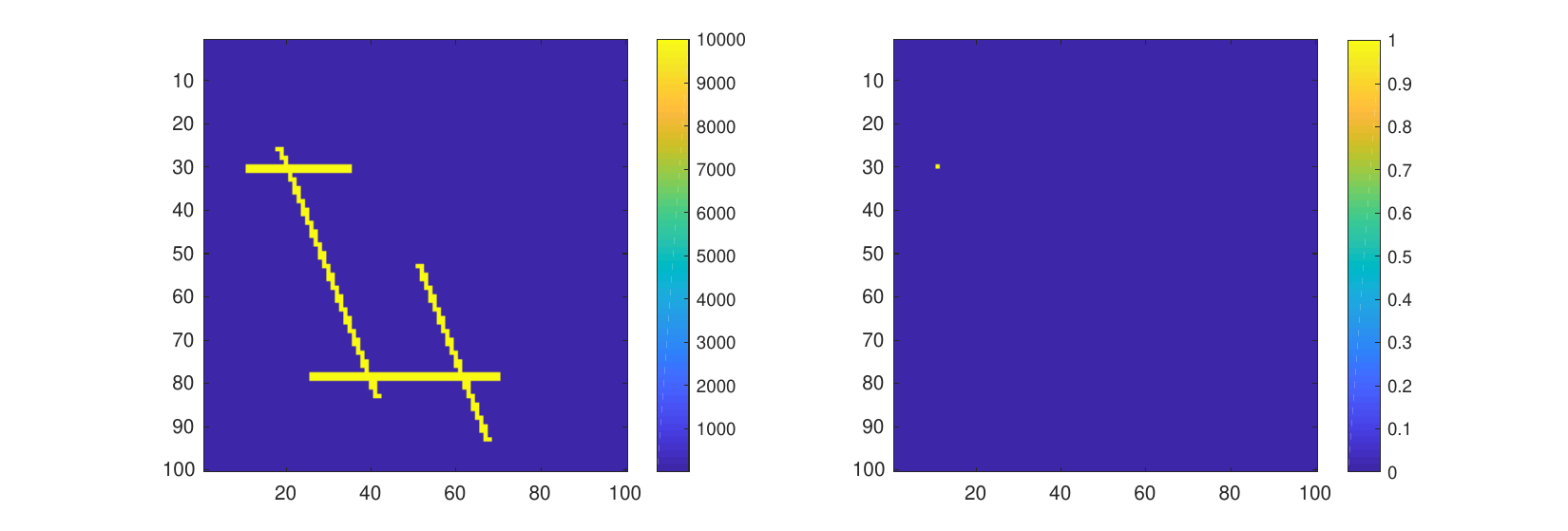}
    \caption{Example 2, left: permeability field, right: source term.}
    \label{fig:source_perm_ex2}
\end{figure}

\begin{figure}
    \centering
    \includegraphics[width=0.8\textwidth]{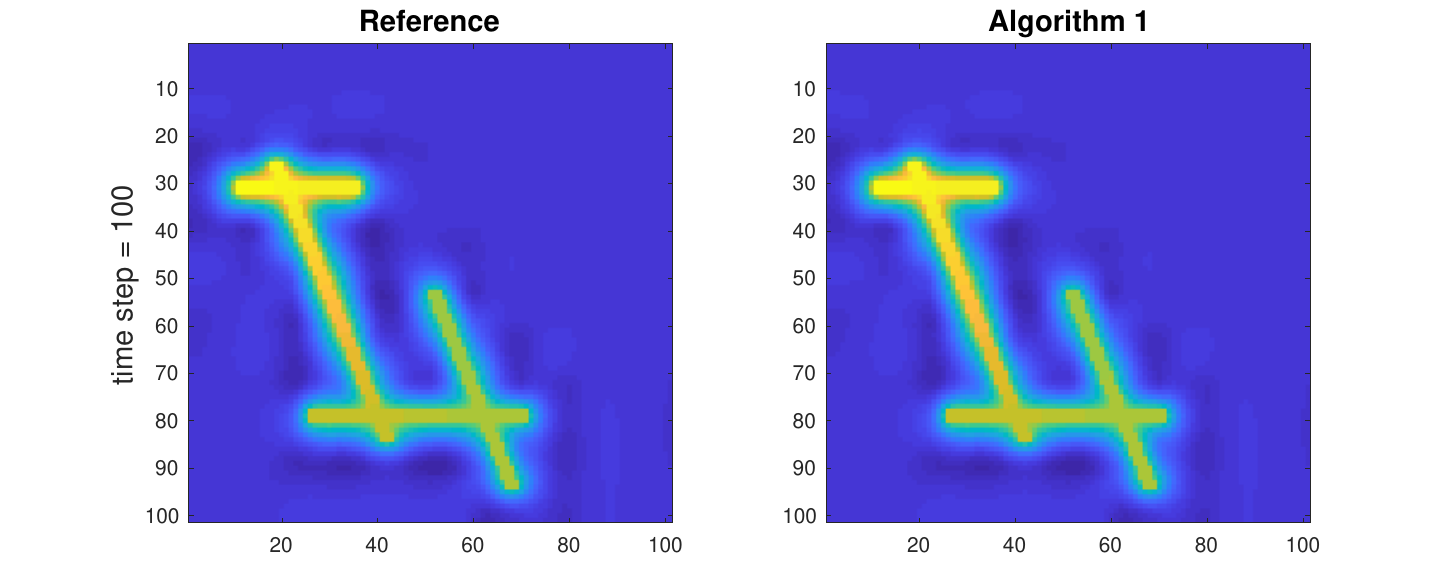}
    \includegraphics[width=0.8\textwidth]{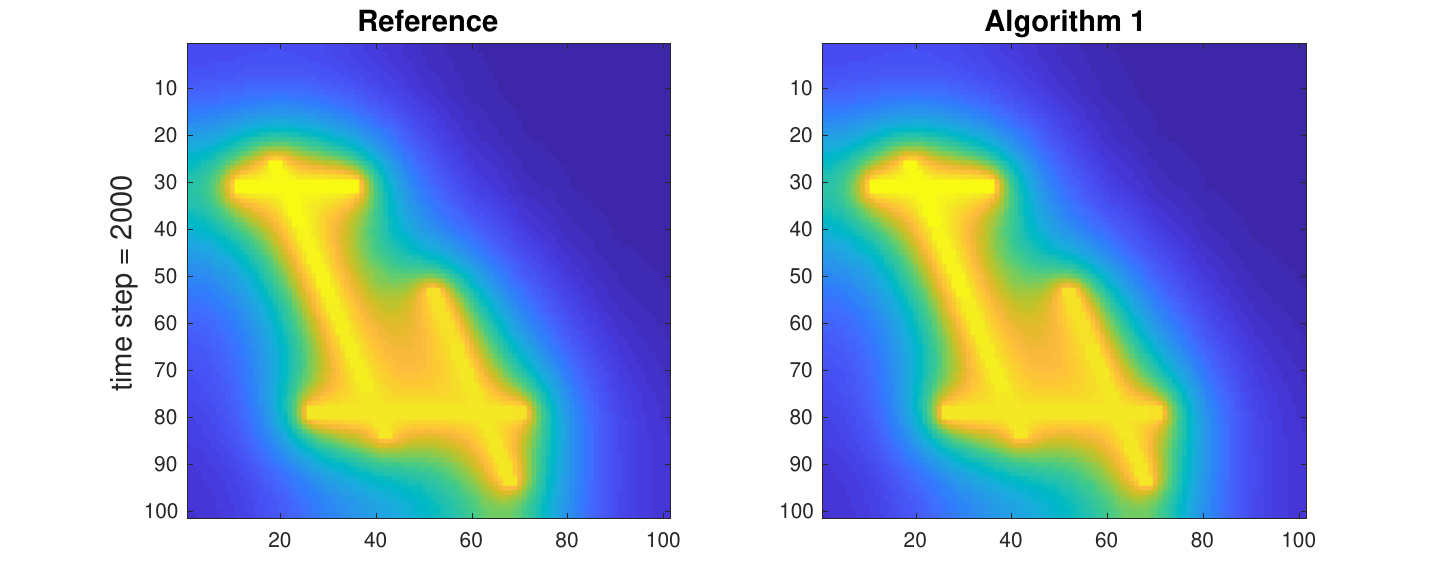}
    \caption{Example 2, the comparison of solutions at different time steps. Left: reference solutions, right: solutions obtained from the proposed algorithm.}
    \label{fig:sol_ex2}
\end{figure}

\begin{figure}
    \centering
    \includegraphics[width=1.0\textwidth]{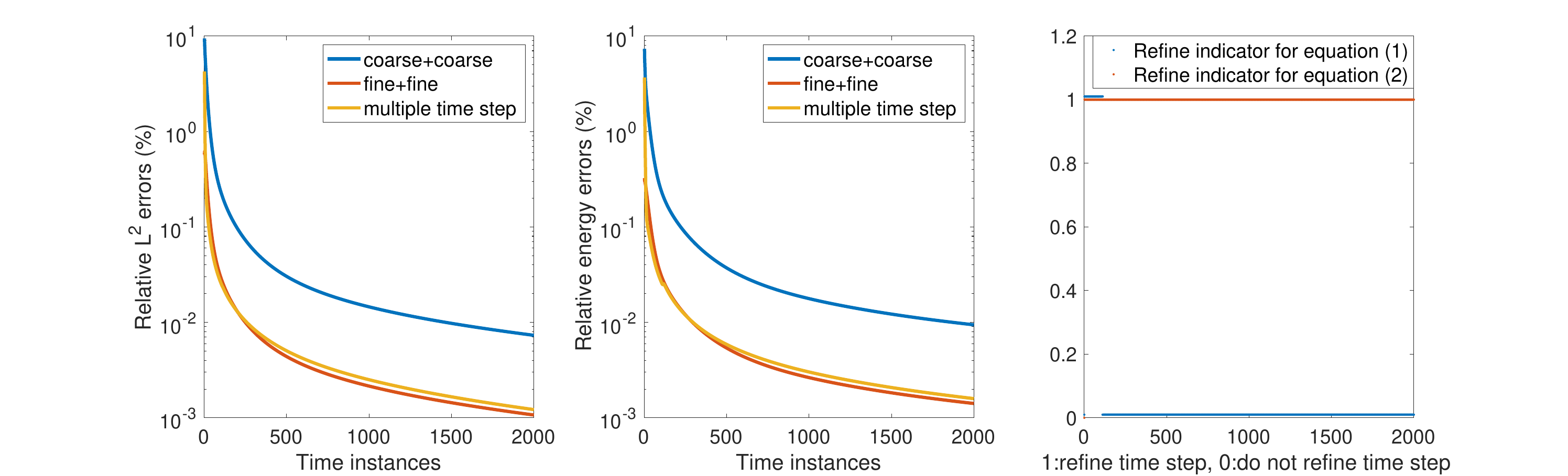} 
    \caption{Example 2, using type 1 error indicators. Left: error history, right: refinement history for two equations. The number of refined steps for the first equation is 111, for the second equation is 1999. The mean $L^2$ error is 0.0124\%, and the energy error is 0.0124\%. }
    \label{fig:errs_ex2_type1} 
\end{figure}



In the end, we show the mean errors when we choose different threshold parameters $\delta_1$ and $\delta_2$ in Table \ref{tab:errs_ex2_deltadiff_type1} for type 1, and Table \ref{tab:errs_ex2_deltadiff_type2} for type 2. For both types of indicators, we observe that as $\delta$ decreases, the errors are getting closer to the fine-fine case, and one needs more refinement steps. Note that for the results in Table \ref{tab:errs_ex2_deltadiff_type2}, we use the same thresholds for $\delta_1$ and $\delta_2$. The numerical results show that for the first type of indicators, the refinement of the two equations may not be carried out simultaneously. However, for the second type of error indicator, the refinement of the two equations is consistent. Moreover, in both types, the errors are more sensitive to the refinement in the second equation (explicit part).

 \begin{table}[]
    \centering
    \begin{tabular}{|l||*{3}{c|}}\hline
    \multicolumn{4}{|c|}{Mean errors ($L^2$/energy error)} \\ \hline
    \diagbox[width=5em]{$\delta_2$}{$\delta_1$}
    &\makebox[5em]{$5\times 10^{-9}$}&\makebox[5em]{$7.5\times 10^{-9}$}&\makebox[5em]{$1\times 10^{-8}$} \\\hline\hline
    $1\times 10^{-12}$ &(0.0135/ 0.0126)&(0.0147/ 0.0136) &(0.0273/ 0.0227)\\\hline
    $1\times 10^{-11}$ &(0.0145/ 0.0138)&(0.0157/ 0.0148)&(0.0281/ 0.0238)\\\hline
    $3\times 10^{-11}$ &(0.0212/ 0.0217)&(0.0223/ 0.0226)&(0.0339/ 0,0308)\\\hline
    $5\times 10^{-11}$ &(0.0244/ 0.0254)&(0.0254/ 0.0263)&(0.0364/ 0.0339)\\\hline
        \multicolumn{4}{|c|}{\# of refinement steps (for eqn. \eqref{eq:partial_exp1}, for eqn. \eqref{eq:partial_exp2})} \\ \hline
    \diagbox[width=5em]{$\delta_2$}{$\delta_1$}
    &\makebox[5em]{$5\times 10^{-9}$}&\makebox[5em]{$7.5\times 10^{-9}$}&\makebox[5em]{$1\times 10^{-8}$} \\\hline\hline
    $1\times 10^{-12}$ &(37,1999) &(24,1999) &(16,1999)\\\hline
    $1\times 10^{-11}$ &(37,1120) &(24,1120) &(16,1120)\\\hline
    $3\times 10^{-11}$ &(37,279) &(24,279) &(16,278)\\\hline
    $5\times 10^{-11}$ &(37,166) &(24,167) &(16,167)\\\hline
    \end{tabular}
    \caption{Example 2, top: average error over all time steps using type 1 error indicators with different error thresholds, the errors are in percentage; bottom: the number of refinement steps for equation \eqref{eq:partial_exp1}/ for equation \eqref{eq:partial_exp2}, respectively. References: fine-fine errors are 0.0122/0.0091 (\%); coarse-coarse errors are 0.1243/0.0921 (\%).}
    \label{tab:errs_ex2_deltadiff_type1}
\end{table}

 \begin{table}[]
    \centering
    \begin{tabular}{|l||*{5}{c|}}\hline
    $\delta_1 = \delta_2$ (in $\cdot \times 10^{-11}$)
     & {$1$} & {$1.5$}  & {$2$} & {$3$}  & {$5$} \\\hline\hline 
     Mean $L^2$ errors &0.0149  &0.0174  &0.0196  &0.0230  &0.0278 \\\hline 
    Mean energy errors &0.0142 &0.0172 &0.0198 &0.0237 &0.0291 \\\hline 
    \# of refinement steps &(96, 984) & (69,588) &(45, 372) &(31, 217) &(24,113)\\\hline 
    \end{tabular}
    \caption{Example 2, using type 2 error indicators with different error thresholds. The average errors (in percentage) over all time steps, and the number of refinement steps for equation \eqref{eq:partial_exp1}, for equation \eqref{eq:partial_exp2}, respectively.}
    \label{tab:errs_ex2_deltadiff_type2}
\end{table}



\subsection{Example 3: time-dependent discontinuous source term}
In the last example, we consider a point source term where the location of the point changes during the simulation. The total simulation time to be $T=0.2$. In the first half of the time interval, $f(x,y) = 1$ at $(x,y)=(0.3,0.5)$ and $f(x,y) = 0$ elsewhere. In the second half of the time interval, $f(x,y) = 1$ at $(x,y)=(0.3,0.11)$ and $f(x,y) = 0$ elsewhere. The number of coarse scale time steps is still $2000$ and the number of fine scale time steps is $20000$. The permeability is the same as in Example 2. 

The comparison of solutions at different time steps using different schemes is presented in Figure \ref{fig:sol_ex3}. 

The errors at coarse time instances are shown in Figure \ref{fig:errs_ex3_type1} for the first type of the indicators, and the behavior for the second type is similar as before, so we omit the results in this example. We can see that at the beginning, \rev{the scheme requires refinement}. Then when the errors \rev{get} smaller, the refinement is deactivated and we can obtain good results without refinement. In the middle of the simulation, when the discontinuity of the source term occurs, there is a jump in the error, and the indicators change from ``not refine" to ``refine" automatically. Then similar processes happen. This shows our algorithm is reliable for complicated source terms.

We also present the mean errors when we choose different threshold parameters $\delta_1$ and $\delta_2$ in Table \ref{tab:errs_ex3_deltadiff_type1} for type 1, and Table \ref{tab:errs_ex3_deltadiff_type2} for type 2. In this example, we observe again that only a few refinement steps are needed in the first equation (implicit part), and the errors are more sensitive to the refinement in the second equation. Using around $3$ refinement steps in the first equation and $470$ steps in the second equation, the average errors are already close to the fine-fine case. 

\rev{We remark that in the above numerical examples, the average run-time of one time step using an implicit scheme without splitting is around 0.0013 second with MATLAB direct solver, and the average run-time of one time step using our splitting method with partially explicit scheme is around 0.0009 second.} This completes the numerical section.

\begin{figure}
    \centering
    \includegraphics[width=0.8\textwidth]{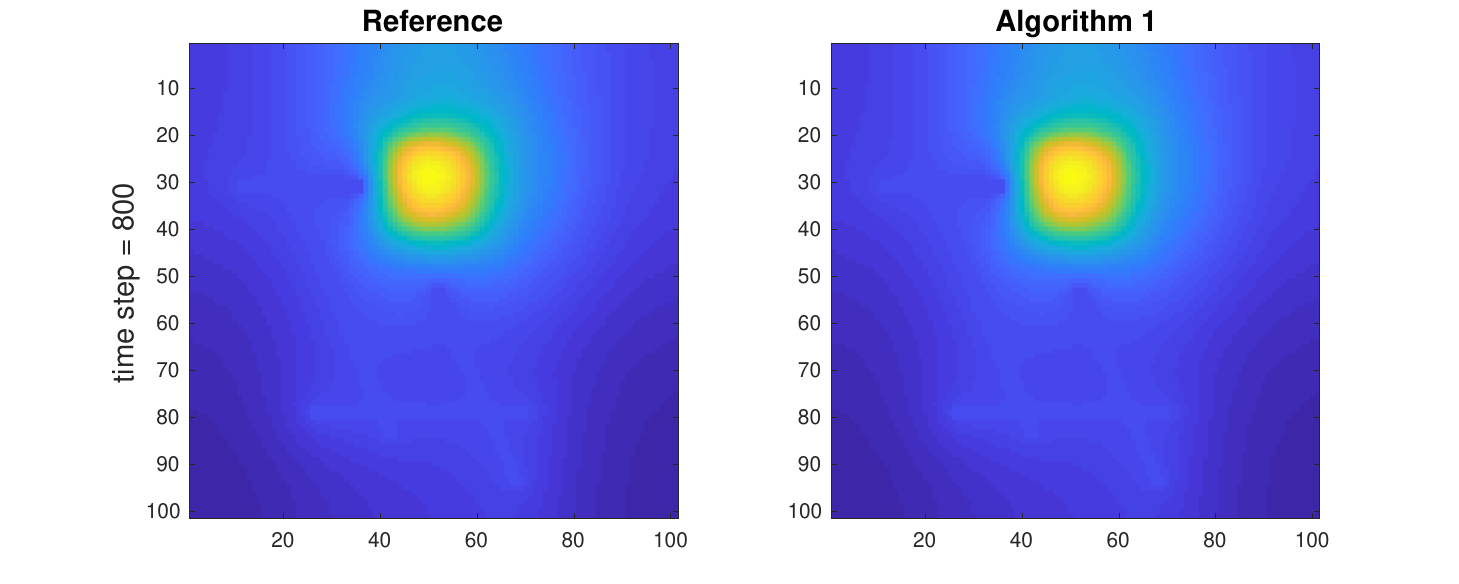}
    \includegraphics[width=0.8\textwidth]{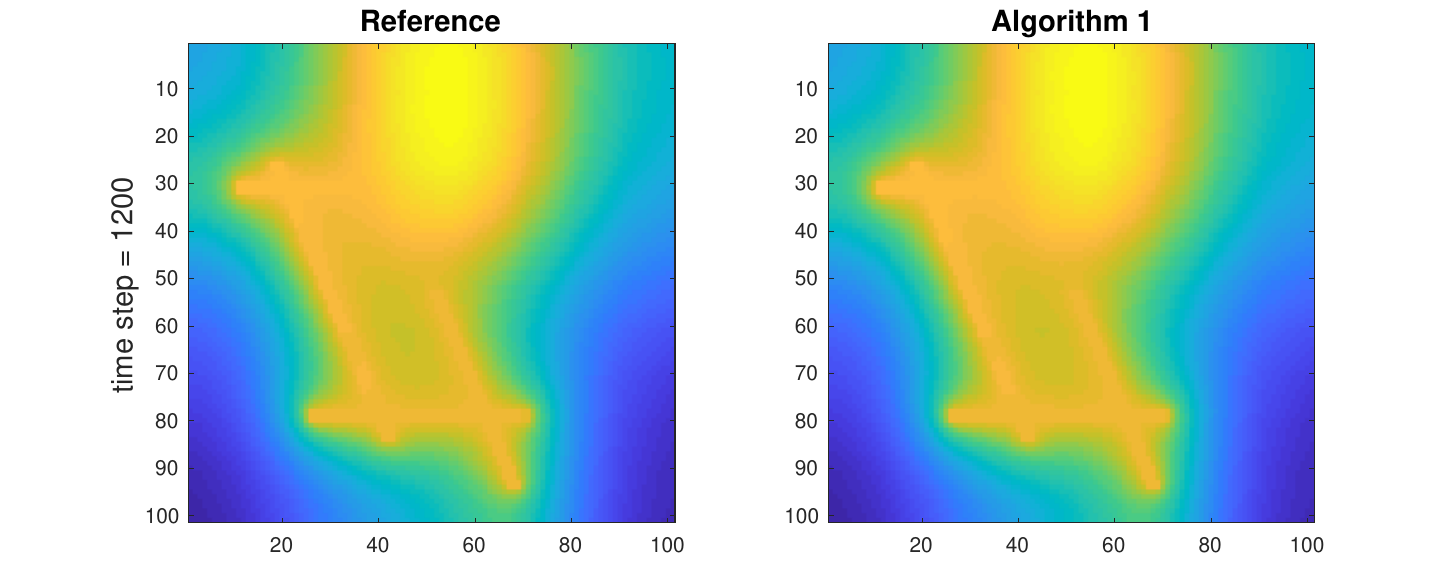}
    \includegraphics[width=0.8\textwidth]{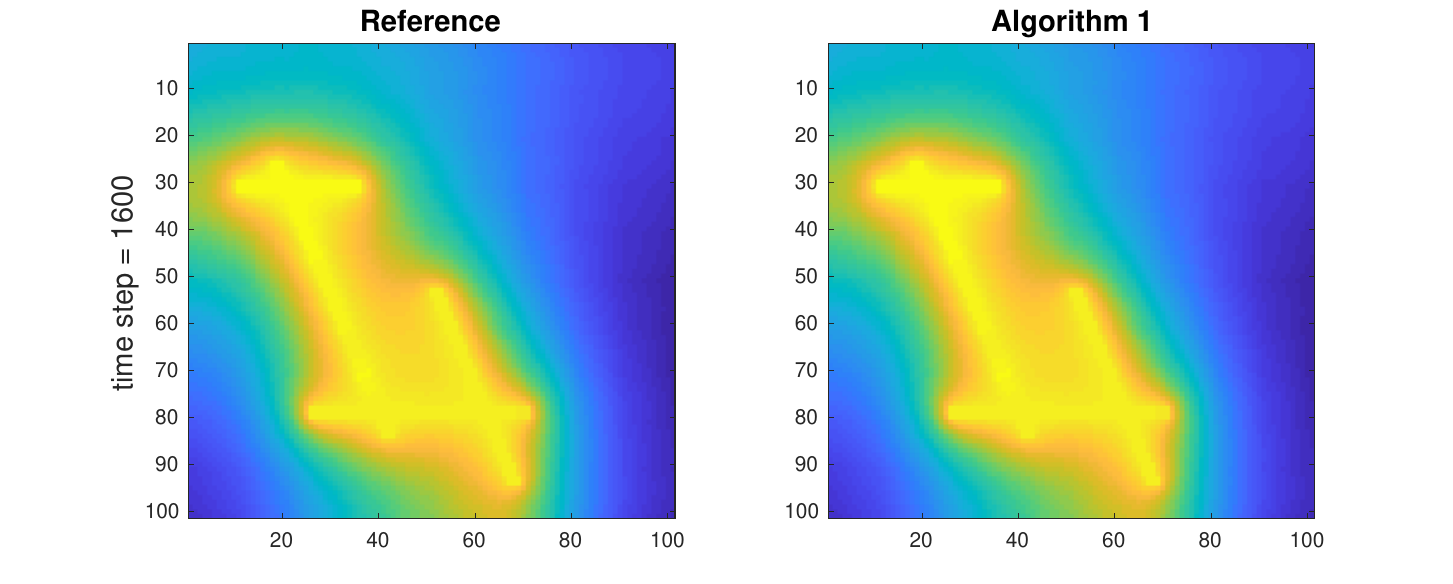}
    \caption{Example 3, the comparison of solutions at different time steps.}
    \label{fig:sol_ex3}
\end{figure}
       

\begin{figure}
    \centering
    \includegraphics[width=1.0\textwidth]{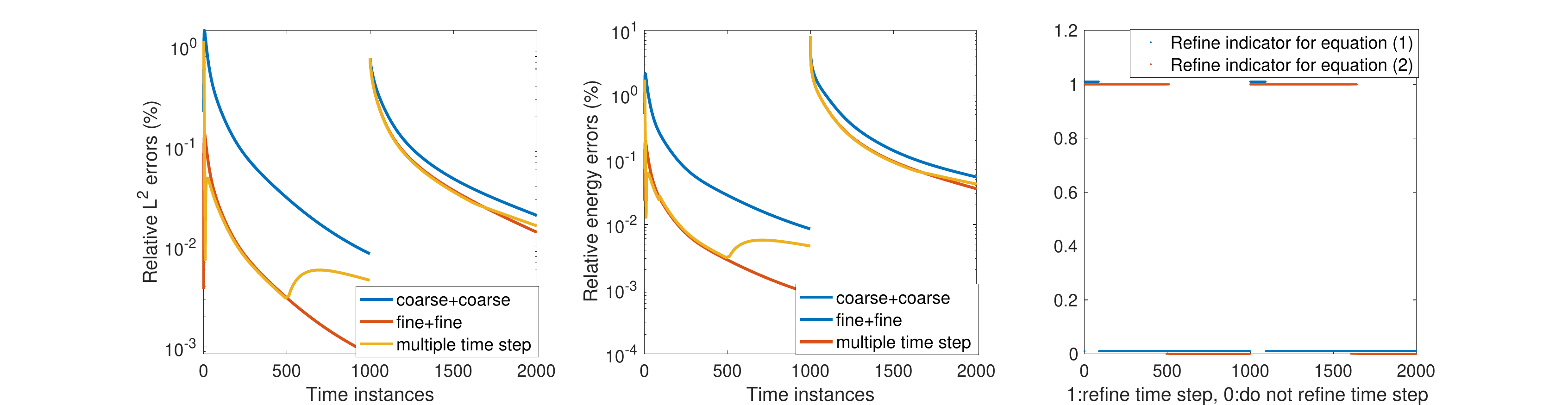}
    \caption{\rev{Example 3, left: error history, right: refinement history for two equations using the first type of indicators. The number of refined steps for the first equation is 180, for the second equation is 1137. The mean $L^2$ error is 0.041\%, and the energy error is 0.132\%.}}
    \label{fig:errs_ex3_type1}
\end{figure}    
          


 \begin{table}[]
    \centering
    \begin{tabular}{|l||*{3}{c|}}\hline
    \multicolumn{4}{|c|}{Mean errors ($L^2$/energy error)} \\ \hline
    \diagbox[width=5em]{$\delta_2$}{$\delta_1$}
    &\makebox[5em]{$1\times 10^{-8}$}&\makebox[5em]{$2\times 10^{-8}$}&\makebox[5em]{$3\times 10^{-7}$} \\\hline\hline
    $3\times 10^{-10}$ &0.0461/ 0.1412  &0.0461/ 0.1413  & 0.0461/ 0.1416 \\\hline
    $1\times 10^{-9}$  &0.0525/ 0.1532  &0.0557/  0.1595  & 0.0557/  0.1597 \\\hline
    $5\times 10^{-9}$  &0.0621/0.1706  &0.0622/  0.1715  &0.0769/  0.1988 \\\hline
    $1\times 10^{-8}$  &0.0883/0.2162  &0.0883/  0.2162  &0.0855/  0.2153 \\\hline
        \multicolumn{4}{|c|}{\# of refinement steps (for eqn. \eqref{eq:partial_exp1}, for eqn. \eqref{eq:partial_exp2})} \\ \hline
    \diagbox[width=5em]{$\delta_2$}{$\delta_1$}
    &\makebox[5em]{$1\times 10^{-8}$}&\makebox[5em]{$2\times 10^{-8}$}&\makebox[5em]{$3\times 10^{-7}$} \\\hline\hline
    $3\times 10^{-10}$ &(180,470)  &(108, 470)  &(3, 470) \\\hline
    $1\times 10^{-9}$  &(209, 248)  &(167, 182)  &(3,182) \\\hline
    $5\times 10^{-9}$  &(265, 112)  &(167, 108)  &(3, 32) \\\hline
    $1\times 10^{-8}$  &(265, 14 )  &(168, 14)  &(6, 13) \\\hline
    \end{tabular}
    \caption{Example 3, top: average error over all time steps using type 1 error indicators with different error thresholds, the errors are in percentage; bottom: the number of refinement steps for equation \eqref{eq:partial_exp1}, for equation \eqref{eq:partial_exp2}, respectively. References: fine-fine errors are 0.0403/0.1308 (\%); coarse-coarse errors are 0.0950/0.2323 (\%).}
    \label{tab:errs_ex3_deltadiff_type1}
\end{table}

   \begin{table}[]
    \centering
    \begin{tabular}{|l||*{5}{c|}}\hline
    $\delta_1 = \delta_2$ (in $\cdot \times 10^{-10}$)
     & {$1$} & {$3$}  & {$5$} & {$7$}  & {$10$} \\\hline\hline
     Mean $L^2$ errors &0.0412   & 0.0464  &0.0501 &0.0529 &0.0565\\\hline
    Mean energy errors &0.1322   &0.1402   & 0.1489  &0.1544 &0.1612\\\hline
    \# of refinement steps &(264,1094) &(89,451) &(43,303) &(25,232) &(13,170) \\\hline 
    \end{tabular}
    \caption{Example 3, using type 2 error indicators with different error thresholds. The average errors (in percentage) over all time steps, and the number of refinement steps for equation \eqref{eq:partial_exp1}, for equation \eqref{eq:partial_exp2}, respectively.}
    \label{tab:errs_ex3_deltadiff_type2}
\end{table}

\section{Conclusion} \label{sec:conclusion}
We presented a multirate method and an adaptive algorithm with some error estimators to solve parabolic equations with multiscale diffusivity coefficients satisfying the accuracy requirement and at a reduced computational cost. We first constructed some multiscale spaces based on CEM-GMsFEM and NLMC, and then adopted appropriate multirate temporal splitting schemes. To be specific, the degrees of freedom corresponding to the fast component (the high permeable regions) are handled implicitly, here the dimension of the multiscale subspace is small. Then the multiscale basis that corresponds to the slow flow are constructed and this part is treated explicitly. We started with a coarse time step size for both implicit and explicit parts, and estimated the errors using some locally computable estimators to determine whether the temporal mesh needs to be refined. The process is carried out adaptively. \rev{Several} numerical tests were performed. The results showed that with reduced computational effort, we can get reliable and accurate approximations. Currently, we use a two-level time step size, future work includes the development of multiple level schemes and space-time adaptive algorithms.

\appendix

\section{Spatial convergence}\label{appendix:thm}

In this section, we give an estimate of the spatial error for the semi-discretization system.

Denote by $V=H_0^1(\Omega)$. Let $u(t,\cdot) \in V$ be the solution of 
\begin{equation}\label{eq:ref_weak}
\begin{aligned}
    (\frac{\partial u }{\partial  t}, v) + a(u,v) &= (f,v), \\
    u(0,\cdot) &= u_0, 
\end{aligned}
\end{equation}
for all $v\in V$, $t\in(0,T]$.


Furthermore, let $V_H=\text{span} \{\psi_{m}^{(i)}, \; 0 \leq m \leq  m_i, 0 \leq i \leq N_c\}$ be the localized NLMC space where $\psi_{m}^{(i)}$ are defined in \eqref{eq:basis}, and $u_H \in V_H$ be the solution of
\begin{equation}\label{eq:nlmc_weak}
\begin{aligned}
    (\frac{\partial u_H }{\partial  t}, v) + a(u_H,v) &= (f,v), \\
    u_H(0,\cdot) &= u_0, 
\end{aligned}
\end{equation}
for all $v\in V_H$, $t\in(0,T]$.

Let $\psi_{m,\text{glo}}^{(i)} \in V_0(\Omega)$ be the global NLMC basis, which are computed form the following constraint energy minimizing problem
\begin{equation}\label{eq:glo_basis}
\begin{aligned}
a(\psi_{m,\text{glo}}^{(i)}, v) + \sum_{K_j \subset \Omega} \left(\mu_0^{(j)} \int_{K_{j,m}}v  + \sum_{1 \leq n \leq m_j} \mu_n^{(j)} \int_{ f_n^{(j)}} v \right) = 0, \quad   \forall v\in V_0(K_i^+), &\\
\int_{K_{j,m}}\psi_{m,\text{glo}}^{(i)}    = \delta_{ij} \delta_{m0}, \quad   \forall K_j \subset \Omega,& \\
\int_{f_n^{(j)}} \psi_{m,\text{glo}}^{(i)}   = \delta_{ij} \delta_{mn}, \quad   \forall f_n^{(j)} \in \mathcal{F}_{j}, \;  \forall K_j \subset\Omega,&
\end{aligned}
\end{equation}
where $\mu_0^{(j)}, \mu_n^{(j)} \in \mathbb{R}$ are Lagrange multipliers.

The global NLMC space is then defined as $V_{\text{glo}} = \text{span} \{\psi_{m,\text{glo}}^{(i)}, \; 0 \leq m \leq  m_i, 0 \leq i \leq N_c\}$. Let $R_{\text{glo}}: V\rightarrow V_{\text{glo}}$ be the projection operator such that for any $u\in V$
\begin{equation*}
    a(R_{\text{glo}},v) = a(u,v),\quad \forall v \in V_{\text{glo}},
\end{equation*}
and $R_H: V\rightarrow V_H$ be the operator such that for any $u\in V$
\begin{equation*}
    a(R_H,v) = a(u,v),\quad \forall v \in V_H.
\end{equation*}

Define the operator $\mathcal{A}: D(\mathcal{A}) \rightarrow L^2(\Omega)$ such that for any $u\in D(\mathcal{A})$, 
\begin{equation*}
    (\mathcal{A}u,v) = a(u,v),\quad \forall v \in V.
\end{equation*}

Following a similar proof as presented in \cite{zhao2020analysis, tony-nlmc}, we have the following lemmas 
\begin{lemma}
Let $u\in D(\mathcal{A})$, then we have
\begin{equation}
   \begin{aligned}
    \|u-R_{\text{glo}} u\|_a &\leq CH\|\mathcal{A}u\|_{L^2(\kappa^{-1})},\\
        \|u-R_{\text{glo}} u\| &\leq CH^2\kappa_{\text{min}}^{-\frac{1}{2}} \| \mathcal{A}u\|_{L^2(\kappa^{-1})}.
       \end{aligned}
\end{equation}
where $\|v\|_{L^2(\kappa^{-1})}^2 = \int_{\Omega} \kappa^{-1} v^2$.
\end{lemma}

\begin{lemma}
If the oversampling size is in $O(log(\frac{\max(\kappa)}{H}))$, then we have \begin{equation}
    \begin{aligned}
        \|u-R_H u\|_a &\leq CH\|\mathcal{A}u\|_{L^2(\kappa^{-1})},\\
         \|u-R_H u\| &\leq CH^2\kappa_{\text{min}}^{-\frac{1}{2}} \| \mathcal{A}u\|_{L^2(\kappa^{-1})}.
    \end{aligned}
\end{equation}
\end{lemma}

\begin{theorem}
    Let $u(t,\cdot)$ be the solution of \eqref{eq:ref_weak} and $u_H(t,\cdot)$ be the solution of \eqref{eq:nlmc_weak}, we have
    \begin{equation*}
        \|u(T, \cdot) - u_H(T, \cdot) \|^2 + \int_0^T \|u-u_H\|_a^2 dt \leq C\kappa_{\text{min}}^{-1} H^2 \left(\|u_0 \|^2 + \int_0^T \|f\|^2 dt \right).
    \end{equation*}
\end{theorem}

\begin{proof}
Take $v= \frac{\partial u }{\partial  t}$ in \eqref{eq:ref_weak} and integrate over $(0,T)$, we get
\begin{equation*}
\begin{aligned}
        \int_0^T \|\frac{\partial u }{\partial  t}\|^2 dt  + \frac{1}{2} \int_0^T \frac{d}{d  t}\|u\|_a^2 dt = \int_0^T (f, \frac{\partial u }{\partial  t}) dt \leq \frac{1}{2} \int_0^T \|f\|^2 dt +  \frac{1}{2} \int_0^T \|\frac{\partial u }{\partial  t}\|^2 dt, 
\end{aligned}
\end{equation*}
this implies 
\begin{equation}\label{eq:ubound}
\begin{aligned}
       \frac{1}{2} \int_0^T \|\frac{\partial u }{\partial  t}\|^2 dt +  \frac{1}{2} \|u(T, \cdot)\|_a^2  \leq C ( \|u_0\|_a^2 + \int_0^T \|f\|^2 dt).
\end{aligned}
\end{equation}
Similarly, take $v= \frac{\partial u_H }{\partial  t}$ in \eqref{eq:nlmc_weak} and integrate over $(0,T)$, we have 
\begin{equation}\label{eq:uHbound}
\begin{aligned}
       \frac{1}{2} \int_0^T \|\frac{\partial u_H }{\partial  t}\|^2 dt +  \frac{1}{2} \|u_H(T, \cdot)\|_a^2  \leq C ( \|u_0\|_a^2 + \int_0^T \|f\|^2 dt).
\end{aligned}
\end{equation}

On the other hand, from equations \eqref{eq:ref_weak} and \eqref{eq:nlmc_weak}, we have 
\begin{equation*}
    (\frac{\partial (u-u_H) }{\partial  t}, v) + a(u-u_H,v) = 0,
\end{equation*}
for all $v\in V_H$. Then we have
\begin{equation*}
        (\frac{\partial (u-u_H) }{\partial  t}, u_H) + a(u-u_H,u_H)  =      (\frac{\partial (u-u_H) }{\partial  t}, R_H u) + a(u-u_H,R_H u)=0.
\end{equation*}
Thus
\begin{equation*}
    \begin{aligned}
           &\frac{1}{2} \frac{d}{d  t}\|u-u_H\|^2  + \|u-u_H\|_a^2  = (\frac{\partial (u-u_H) }{\partial  t}, u-R_H u) + a(u-u_H,u-R_H u)\\
           &\leq \left( \|\frac{\partial u }{\partial  t}\|+\|\frac{\partial u_H }{\partial  t}\|\right)\| u-R_H u\| + \frac{1}{2}\|u-u_H\|_a^2 + \frac{1}{2}\|u-R_H u\|_a^2,
    \end{aligned}
\end{equation*}
integrate over $(0,T)$, we get
\begin{equation*}
    \begin{aligned}
           &\frac{1}{2} \|u(T, \cdot)-u_H(T, \cdot)\|^2  + \frac{1}{2} \int_0^T  \|u-u_H\|_a^2 dt\\
           &\leq \int_0^T  \left( \|\frac{\partial u }{\partial  t}\|+\|\frac{\partial u_H }{\partial  t}\|\right)\| u-R_H u\| dt  + \frac{1}{2}\int_0^T \|u-R_H u\|_a^2dt\\
           &\leq \left( \int_0^T ( \|\frac{\partial u }{\partial  t}\|+\|\frac{\partial u_H }{\partial  t}\|)^2 dt \right)^{\frac{1}{2}} \left( \int_0^T CH^4\kappa_{\text{min}}^{-1}\|\mathcal{A}u\|_{L^2(\kappa^{-1})}^2  dt \right)^{\frac{1}{2}}\\
           &+\frac{1}{2}\int_0^T CH^2\|\mathcal{A}u\|_{L^2(\kappa^{-1})}^2 dt\\
           &\leq C\kappa_{\text{min}}^{-1} H^2 ( \|u_0\|_a^2 + \int_0^T \|f\|^2 dt),
    \end{aligned}
\end{equation*}
using \eqref{eq:ubound},\eqref{eq:uHbound} and the fact that $\mathcal{A}u = f- \frac{\partial u }{\partial  t} $. This completes the proof.

\end{proof}

\section{Comparison of results between two types of indicators}\label{appendix:indicator}

\rev{In the following, we consider example 1, and present the results for both type 1 and type 2 indicators when $\delta_1 = \delta_2 = 5\times 10^{-6}$. Additionally, we also show results for type 2 when $\delta_1 = 1.5\times 10^{-4}$, $\delta_2 = 5\times 10^{-6}$, this is to compare with the results in Figure \ref{fig:errs_ex1_type1} with the same sets of thresholds.}
\begin{figure}
    \centering
    \includegraphics[width=1.0\textwidth]{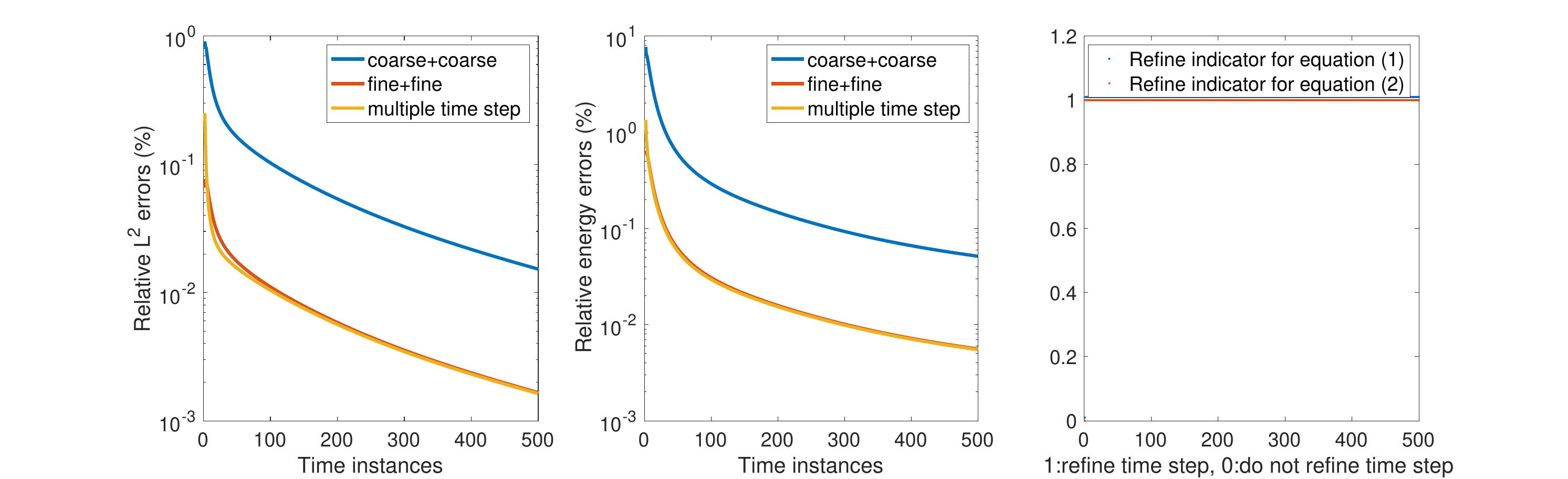}
    \caption{\rev{Example 1, using type 1 error indicators. $\delta_1 = \delta_2 = 5\times 10^{-6}$. Left and middle: error history, right: refinement history for two equations. The number of refined steps for the first equation is 499, for the second equation is 499. The mean $L^2$ error is 0.007\%, and the energy error is 0.0335\%.}}
    \label{fig:errs_ex1_type1_2} 
\end{figure}

\begin{figure}
    \centering
    \includegraphics[width=1.0\textwidth]{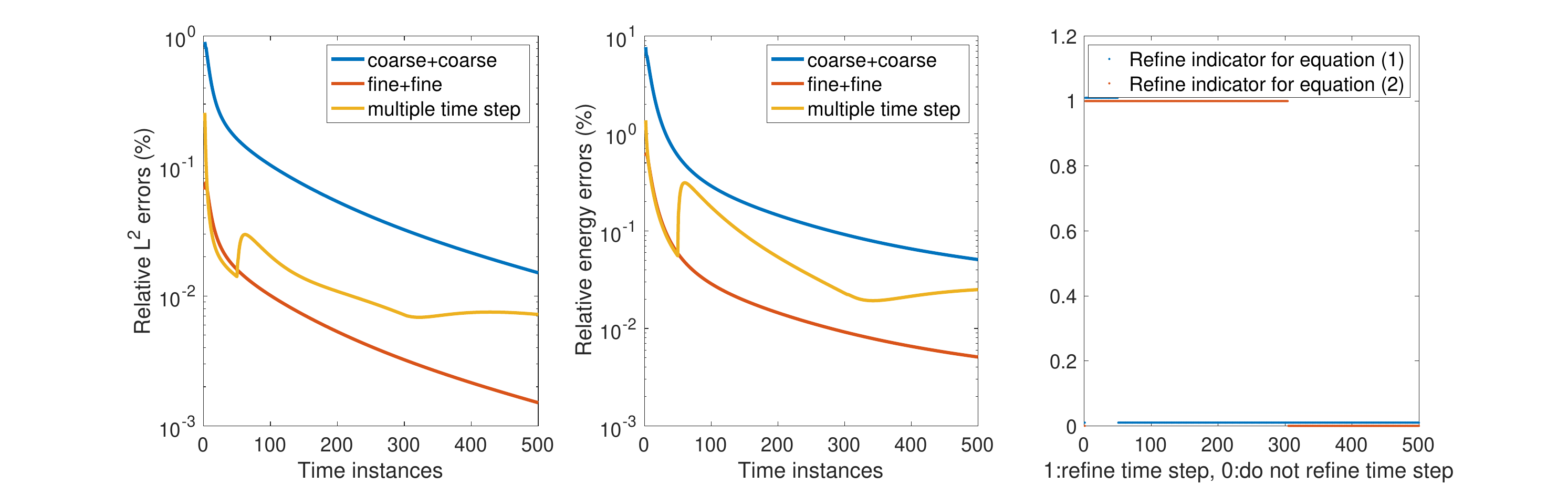}
    \caption{\rev{Example 1, using type 2 error indicators. $\delta_1 = \delta_2 = 5\times 10^{-6}$. Left and middle: error history, right: refinement history for two equations. The number of refined steps for the first equation is 49, for the second equation is 303. The mean $L^2$ error is 0.0132\%, and the energy error is 0.0812\%.}}
    \label{fig:errs_ex1_type2_2} 
\end{figure}

\begin{figure}
    \centering
    \includegraphics[width=1.0\textwidth]{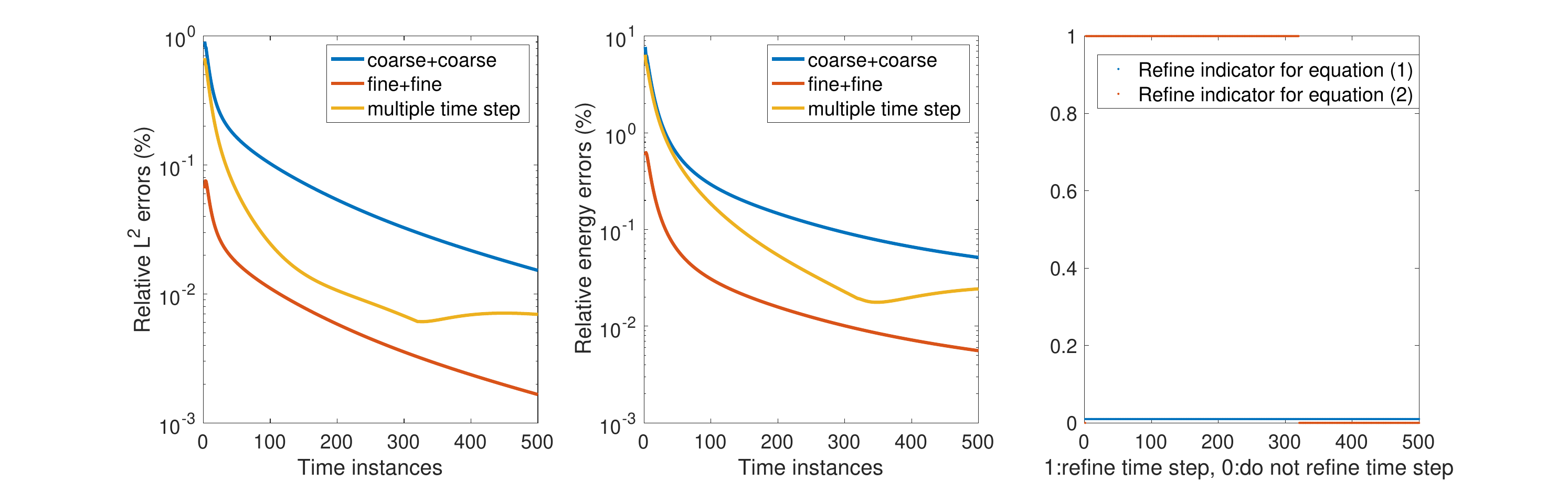}
    \caption{\rev{Example 1, using type 2 error indicators. $\delta_1 = 1.5\times 10^{-4}$, $\delta_2 = 5\times 10^{-6}$. Left and middle: error history, right: refinement history for two equations. The number of refined steps for the first equation is 0, for the second equation is 319. The mean $L^2$ error is 0.0311\%, and the energy error is 0.2370\%.}}
    \label{fig:errs_ex1_type2} 
\end{figure}

\section*{Acknowledgments}

\bibliographystyle{siam} 
\bibliography{references}

\begin{thebibliography}{10}

\bibitem{aarnes2008mixed}
{\sc J.~Aarnes and Y.~Efendiev}, {\em Mixed multiscale finite element methods
  for stochastic porous media flows}, SIAM Journal on Scientific Computing, 30
  (2008), pp.~2319--2339.

\bibitem{aarnesMMS}
{\sc J.~E. Aarnes, Y.~Efendiev, and L.~Jiang}, {\em Mixed multiscale finite
  element methods using limited global information}, Multiscale Modeling \&
  Simulation, 7 (2008), pp.~655--676.

\bibitem{abdulle2012explicit}
{\sc A.~Abdulle}, {\em Explicit methods for stiff stochastic differential
  equations}, in Numerical Analysis of Multiscale Computations, Springer, 2012,
  pp.~1--22.

\bibitem{abdulle2020explicit}
{\sc A.~Abdulle, M.~J. Grote, and G.~R. de~Souza}, {\em Explicit stabilized
  multirate method for stiff differential equations}, arXiv preprint
  arXiv:2006.00744,  (2020).

\bibitem{allaire2005multiscale}
{\sc G.~Allaire and R.~Brizzi}, {\em A multiscale finite element method for
  numerical homogenization}, Multiscale Modeling \& Simulation, 4 (2005),
  pp.~790--812.

\bibitem{ariel2009multiscale}
{\sc G.~Ariel, B.~Engquist, and R.~Tsai}, {\em A multiscale method for highly
  oscillatory ordinary differential equations with resonance}, Mathematics of
  Computation, 78 (2009), pp.~929--956.

\bibitem{benner2015survey}
{\sc P.~Benner, S.~Gugercin, and K.~Willcox}, {\em A survey of projection-based
  model reduction methods for parametric dynamical systems}, SIAM review, 57
  (2015), pp.~483--531.

\bibitem{bui2008model}
{\sc T.~Bui-Thanh, K.~Willcox, and O.~Ghattas}, {\em Model reduction for
  large-scale systems with high-dimensional parametric input space}, SIAM
  Journal on Scientific Computing, 30 (2008), pp.~3270--3288.

\bibitem{carciopolo2020conservative}
{\sc L.~D. Carciopolo, L.~Formaggia, A.~Scotti, and H.~Hajibeygi}, {\em
  Conservative multirate multiscale simulation of multiphase flow in
  heterogeneous porous media}, Journal of Computational Physics, 404 (2020),
  p.~109134.

\bibitem{nonlinear_deim}
{\sc S.~Chaturantabut and D.~C. Sorensen}, {\em Nonlinear model reduction via
  discrete empirical interpolation}, SIAM Journal on Scientific Computing, 32
  (2010), pp.~2737--2764.

\bibitem{chinesta2011short}
{\sc F.~Chinesta, P.~Ladeveze, and E.~Cueto}, {\em A short review on model
  order reduction based on proper generalized decomposition}, Archives of
  Computational Methods in Engineering, 18 (2011), p.~395.

\bibitem{chung2019correction}
{\sc E.~Chung, Y.~Efendiev, and W.~T. Leung}, {\em Correction to: Constraint
  energy minimizing generalized multiscale finite element method in the mixed
  formulation}, Computational Geosciences, 23 (2019), pp.~1217--1217.

\bibitem{NLMC}
{\sc E.~T. Chung, Efendiev, W.~T. Leung, M.~Vasilyeva, and Y.~Wang}, {\em
  Non-local multi-continua upscaling for flows in heterogeneous fractured
  media}, arXiv preprint arXiv:1708.08379,  (2018).

\bibitem{MixedGMsFEM}
{\sc E.~T. Chung, Y.~Efendiev, and C.~S. Lee}, {\em Mixed generalized
  multiscale finite element methods and applications}, Multiscale Modeling \&
  Simulation, 13 (2015), pp.~338--366.

\bibitem{cem-gmsfem}
{\sc E.~T. Chung, Y.~Efendiev, and W.~T. Leung}, {\em Constraint energy
  minimizing generalized multiscale finite element method}, arXiv preprint
  arXiv:1704.03193,  (2017).

\bibitem{cem-splitting}
{\sc E.~T. Chung, Y.~Efendiev, W.~T. Leung, and P.~N. Vabishchevich}, {\em
  Contrast-independent partially explicit time discretizations for multiscale
  flow problems}, Journal of Computational Physics, 445 (2021).

\bibitem{AdaptiveGMsFEM}
{\sc E.~T. Chung, Y.~Efendiev, and G.~Li}, {\em An adaptive {GM}s{FEM} for high
  contrast flow problems}, J. Comput. Phys., 273 (2014), pp.~54--76.

\bibitem{adaptive_offline}
{\sc E.~T. Chung, Y.~Efendiev, and G.~Li}, {\em An adaptive gmsfem for
  high-contrast flow problems}, Journal of Computational Physics, 273 (2014),
  pp.~54--76.

\bibitem{constantinescu2007multirate}
{\sc E.~M. Constantinescu and A.~Sandu}, {\em Multirate timestepping methods
  for hyperbolic conservation laws}, Journal of Scientific Computing, 33
  (2007), pp.~239--278.

\bibitem{constantinescu2013extrapolated}
\leavevmode\vrule height 2pt depth -1.6pt width 23pt, {\em Extrapolated
  multirate methods for differential equations with multiple time scales},
  Journal of Scientific Computing, 56 (2013), pp.~28--44.

\bibitem{dawson2001high}
{\sc C.~Dawson and R.~Kirby}, {\em High resolution schemes for conservation
  laws with locally varying time steps}, SIAM Journal on Scientific Computing,
  22 (2001), pp.~2256--2281.

\bibitem{dawson1991finite}
{\sc C.~N. Dawson, Q.~Du, and T.~F. Dupont}, {\em A finite difference domain
  decomposition algorithm for numerical solution of the heat equation},
  Mathematics of computation, 57 (1991), pp.~63--71.

\bibitem{ee03}
{\sc W.~E and B.~Engquist}, {\em Heterogeneous multiscale methods}, Comm. Math.
  Sci., 1 (2003), pp.~87--132.

\bibitem{GMsFEM13}
{\sc Y.~Efendiev, J.~Galvis, and T.~Hou}, {\em Generalized multiscale finite
  element methods (gmsfem)}, Journal of Computational Physics, 251 (2013),
  pp.~116--135.

\bibitem{egw10}
{\sc Y.~Efendiev, J.~Galvis, and X.~Wu}, {\em Multiscale finite element methods
  for high-contrast problems using local spectral basis functions}, Journal of
  Computational Physics, 230 (2011), pp.~937--955.

\bibitem{eh09}
{\sc Y.~Efendiev and T.~Hou}, {\em {Multiscale Finite Element Methods: Theory
  and Applications}}, vol.~4 of Surveys and Tutorials in the Applied
  Mathematical Sciences, Springer, New York, 2009.

\bibitem{engquist2005heterogeneous}
{\sc B.~Engquist and Y.-H. Tsai}, {\em Heterogeneous multiscale methods for
  stiff ordinary differential equations}, Mathematics of computation, 74
  (2005), pp.~1707--1742.

\bibitem{engstler1997multirate}
{\sc C.~Engstler and C.~Lubich}, {\em Multirate extrapolation methods for
  differential equations with different time scales}, Computing, 58 (1997),
  pp.~173--185.

\bibitem{ewing1994finite}
{\sc R.~E. Ewing, R.~D. Lazarov, and A.~T. Vassilev}, {\em Finite difference
  scheme for parabolic problems on composite grids with refinement in time and
  space}, SIAM journal on numerical analysis, 31 (1994), pp.~1605--1622.

\bibitem{giraldo2013implicit}
{\sc F.~X. Giraldo, J.~F. Kelly, and E.~M. Constantinescu}, {\em
  Implicit-explicit formulations of a three-dimensional nonhydrostatic unified
  model of the atmosphere (numa)}, SIAM Journal on Scientific Computing, 35
  (2013), pp.~B1162--B1194.

\bibitem{gunther2001multirate}
{\sc M.~G{\"u}nther, A.~Kvaern{\o}, and P.~Rentrop}, {\em Multirate partitioned
  runge-kutta methods}, BIT Numerical Mathematics, 41 (2001), pp.~504--514.

\bibitem{gunther1993multirate}
{\sc M.~G{\"u}nther and P.~Rentrop}, {\em Multirate row methods and latency of
  electric circuits}, Applied Numerical Mathematics, 13 (1993), pp.~83--102.

\bibitem{gunther2016multirate}
{\sc M.~G{\"u}nther and A.~Sandu}, {\em Multirate generalized additive runge
  kutta methods}, Numerische Mathematik, 133 (2016), pp.~497--524.

\bibitem{henning2014localized}
{\sc P.~Henning and A.~M{\aa}lqvist}, {\em Localized orthogonal decomposition
  techniques for boundary value problems}, SIAM Journal on Scientific
  Computing, 36 (2014), pp.~A1609--A1634.

\bibitem{henning2013oversampling}
{\sc P.~Henning and D.~Peterseim}, {\em Oversampling for the multiscale finite
  element method}, Multiscale Modeling \& Simulation, 11 (2013),
  pp.~1149--1175.

\bibitem{POD_notes}
{\sc M.~Hinze and S.~Volkwein}, {\em Proper orthogonal decomposition surrogate
  models for nonlinear dynamical systems: Error estimates and suboptimal
  control}, in Dimension reduction of large-scale systems, Springer, 2005,
  pp.~261--306.

\bibitem{hfmq98}
{\sc T.~Hughes, G.~Feij\'oo, L.~Mazzei, and J.-B. Quincy}, {\em The variational
  multiscale method - a paradigm for computational mechanics}, Comput. Methods
  Appl. Mech Engrg., 127 (1998), pp.~3--24.

\bibitem{hundsdorfer2013numerical}
{\sc W.~Hundsdorfer and J.~G. Verwer}, {\em Numerical solution of
  time-dependent advection-diffusion-reaction equations}, vol.~33, Springer
  Science \& Business Media, 2013.

\bibitem{maalqvist2018multiscale}
{\sc A.~M{\aa}lqvist and A.~Persson}, {\em Multiscale techniques for parabolic
  equations}, Numerische mathematik, 138 (2018), pp.~191--217.

\bibitem{maalqvist2014localization}
{\sc A.~M{\aa}lqvist and D.~Peterseim}, {\em Localization of elliptic
  multiscale problems}, Mathematics of Computation, 83 (2014), pp.~2583--2603.

\bibitem{marchuk1990splitting}
{\sc G.~I. Marchuk}, {\em Splitting and alternating direction methods},
  Handbook of numerical analysis, 1 (1990), pp.~197--462.

\bibitem{maurits1998explicit}
{\sc N.~Maurits, H.~Van~der Ven, and A.~Veldman}, {\em Explicit multi-time
  stepping methods for convection-dominated flow problems}, Computer methods in
  applied mechanics and engineering, 157 (1998), pp.~133--150.

\bibitem{osher1983numerical}
{\sc S.~Osher and R.~Sanders}, {\em Numerical approximations to nonlinear
  conservation laws with locally varying time and space grids}, Mathematics of
  computation, 41 (1983), pp.~321--336.

\bibitem{owhadi2017multigrid}
{\sc H.~Owhadi}, {\em Multigrid with rough coefficients and multiresolution
  operator decomposition from hierarchical information games}, SIAM Review, 59
  (2017), pp.~99--149.

\bibitem{owhadi2017gamblets}
{\sc H.~Owhadi and L.~Zhang}, {\em Gamblets for opening the
  complexity-bottleneck of implicit schemes for hyperbolic and parabolic
  odes/pdes with rough coefficients}, Journal of Computational Physics, 347
  (2017), pp.~99--128.

\bibitem{savcenco2007multirate}
{\sc V.~Savcenco, W.~Hundsdorfer, and J.~Verwer}, {\em A multirate time
  stepping strategy for stiff ordinary differential equations}, BIT Numerical
  Mathematics, 47 (2007), pp.~137--155.

\bibitem{shishkin2000interpolation}
{\sc G.~Shishkin and P.~Vabishchevich}, {\em Interpolation finite difference
  schemes on grids locally refined in time}, Computer methods in applied
  mechanics and engineering, 190 (2000), pp.~889--901.

\bibitem{sportisse2000analysis}
{\sc B.~Sportisse}, {\em An analysis of operator splitting techniques in the
  stiff case}, Journal of computational physics, 161 (2000), pp.~140--168.

\bibitem{tang1998convergence}
{\sc T.~Tang}, {\em Convergence analysis for operator-splitting methods applied
  to conservation laws with stiff source terms}, SIAM journal on numerical
  analysis, 35 (1998), pp.~1939--1968.

\bibitem{trompert1991static}
{\sc R.~Trompert and J.~Verwer}, {\em A static-regridding method for
  two-dimensional parabolic partial differential equations}, Applied numerical
  mathematics, 8 (1991), pp.~65--90.

\bibitem{verwer1998note}
{\sc J.~G. Verwer and B.~Sportisse}, {\em A note on operator splitting in a
  stiff linear case}, Modelling, Analysis and Simulation [MAS],  (1998).

\bibitem{vohralik2013posteriori}
{\sc M.~Vohral{\'\i}k and M.~F. Wheeler}, {\em A posteriori error estimates,
  stopping criteria, and adaptivity for two-phase flows}, Computational
  Geosciences, 17 (2013), pp.~789--812.

\bibitem{tony-nlmc}
{\sc J.~Zhang and S.~W. Cheung}, {\em Analysis of non-local multicontinuum
  upscaling for dual continuum model}, Journal of Computational and Applied
  Mathematics,  (2021), p.~113873.

\bibitem{zhao2020analysis}
{\sc L.~Zhao and E.~T. Chung}, {\em An analysis of the nlmc upscaling method
  for high contrast problems}, Journal of Computational and Applied
  Mathematics, 367 (2020), p.~112480.

\end{thebibliography}

\end{document}